   \def\MR#1{}
\definecolor{seagreen}{RGB}{46,139,87}
\definecolor{maroon}{RGB}{128,0,0}
\definecolor{darkviolet}{RGB}{148,0,211}
\definecolor{twelve}{RGB}{100,100,170}
\definecolor{thirteen}{RGB}{100,150,50}
\definecolor{fourteen}{RGB}{200,0,0}
\definecolor{fifteen}{RGB}{0,200,0}
\definecolor{sixteen}{RGB}{0,0,200}
\definecolor{seventeen}{RGB}{200,0,200}
\definecolor{eighteen}{RGB}{0,200,200}
\newcommand{\bb}[1]{\mathbb{#1}}
\newcommand{\es}[1]{\EuScript{#1}}
\renewcommand{\sf}[1]{{\mathsf{#1}}}
\DeclareMathOperator{\colim}{\mathsf{colim}}
\newcommand{\fib}{\mathsf{fib}}
\newcommand{\s}{{\sf{Sp}}}
\newcommand{\T}{\es{S}_\ast}
\newcommand{\poly}[1]{\mathsf{Poly}^{\leq #1}}
\newcommand{\homog}[1]{\mathsf{Homog}^{#1}}
\newcommand{\vect}[1]{\mathsf{Vect}_{#1}}
\newcommand{\Aut}{\mathsf{Aut}}
\newcommand{\Fun}{\sf{Fun}}
\DeclareMathOperator{\Hom}{\mathsf{Hom}}
\DeclareMathOperator{\res}{\mathsf{res}}
\newcommand{\op}{\mathsf{op}}
  \newcommand{\adjunction}[4]{
\xymatrix{
#1:#2 \ar@<.5ex>[r] &
\ar@<.5ex>[l] #3:#4
}}
\newtheorem{thm}{Theorem}[section]
\newtheorem{lem}[thm]{Lemma}
\newtheorem{xxthm}{Theorem}
\theoremstyle{definition}
\newtheorem{definition}[thm]{Definition}
\newtheorem{rem}[thm]{Remark}
\begin{document}
\title{A Dwyer--Rezk classification for polynomial functors in Weiss calculus}

\author{David Barnes}
\address[Barnes]{Queen's University Belfast}
\email{d.barnes@qub.ac.uk}

\author{Magdalena K\k{e}dziorek}
\address[K\k{e}dziorek]{Radboud University Nijmegen}
\email{m.kedziorek@math.ru.nl}

\author{Niall Taggart}
\address[Taggart]{Queen's University Belfast}
\email{n.taggart@qub.ac.uk}

\begin{abstract}
In Goodwillie calculus, unpublished work of Dwyer and Rezk provides a classification of reduced filtered colimit preserving $d$-excisive functors from pointed spaces to spectra as spectrum-valued functors on the category of finite sets of cardinality at most $d$ and epimorphisms. We prove through different methods the analogous result in Weiss calculus: $d$-polynomial functors are equivalent to spectrum-valued functors on the category of finite-dimensional inner product spaces of dimension at most $d$ and orthogonal epimorphisms. Via similar methods we obtain a new proof of the classification of homogeneous functors.
\end{abstract}
\maketitle

\section{Introduction}

Weiss calculus is a homotopy-theoretic tool designed to study geometric problems arising from differential topology with applications throughout geometry and homotopy theory. For $\bb{k}$ any of the skew fields $\bb{R}$, $\bb{C}$ or $\bb{H}$, the calculus approximates a spectrum-valued functor $F: \vect{\bb{k}} \to \s$ from the category of finite-dimensional inner product spaces over $\bb{k}$ and linear isometric embeddings by a tower 
\[\begin{tikzcd}
	&& F \\
	\cdots & {P_dF} & \cdots & {P_1F} & {P_0F}
	\arrow[from=2-4, to=2-5]
	\arrow[from=2-2, to=2-3]
	\arrow[from=2-3, to=2-4]
	\arrow[bend right=30, from=1-3, to=2-2]
	\arrow[from=2-1, to=2-2]
	\arrow[bend left=30, from=1-3, to=2-4]
	\arrow[bend left=20, from=1-3, to=2-5]
\end{tikzcd}\]
of functors in which the functor $P_dF$ is the best approximation to $F$ by a $d$-polynomial functor. 

It is reasonable to think of Weiss calculus as an adaptation of Goodwillie calculus where the role of the finite set of $d$ elements is replaced by the Euclidean space $\bb{k}^d$. This `geometrization' of combinatorics induces subtle complexities within Weiss calculus, and one expects the study of polynomial functors in Weiss calculus to mirror the study of excisive functors from pointed spaces to spectra rather than excisive endofunctors of spectra.

In Goodwillie calculus, there are many known models for the $\infty$-category of $d$-excisive functors between some combination of the categories of (pointed) spaces and spectra. For endofunctors of spectra, Arone--Ching~\cite{AroneChingclassification} provided a model in terms of truncated coalgebras over a comonad, and Glasman~\cite{Glasman} gives a classification in terms of (spectral) Mackey functors on the category $\sf{Epi}_{\leq d}$ of finite sets of cardinality at most $d$ and epimorphisms.   
The difference between these classifications lies in the `classifying objects', i.e., the Arone--Ching classification is in terms of the derivative functors while the Glasman classification is in terms of the cross-effects. 

For functors from pointed spaces to spectra, Arone--Ching~\cite{AroneChingclassification, AroneChingcrosseffects} provide a model again in terms of truncated coalgebras over a comonad using the derivative spectra. However, a simpler model can be given using the cross-effect functors: unpublished work of Dwyer--Rezk describes $d$-excisive functors as spectrum-valued functors on the category $\sf{Epi}_{\leq d}$. 
This contrasts with the Mackey functor description of the spectra to spectra case where one has both `restrictions' and `transfers', in the  pointed spaces to spectra case one just has `restrictions'.
There are several different approaches to models in the style of Dwyer and Rezk, see e.g., work of Arone--Ching~\cite[Proposition 3.15, Theorem 3.82]{AroneChingcrosseffects}, Lurie~\cite[Theorem 6.1.5.1]{HA}, and Chorny--White~\cite[Corollary 5.1]{ChornyWhite}, respectively.

In Weiss calculus, work of Malin and the third author~\cite{MalinTaggart} used the derivatives as the classifying objects to provide a model for spectrum-valued $d$-polynomial functors in terms of truncated coalgebras over a comonad. In this article we prove the following classification analogous to that of Dwyer--Rezk for $d$-excisive functors. 

\begin{xxthm}\label{main thm}
There is an equivalence of $\infty$-categories
\[
\poly{d}(\vect{\bb{k}}, \s) \simeq \Fun(\sf{OEpi}_{\leq d}, \s)
\]
between the $\infty$-category of $d$-polynomial functors and the $\infty$-category of spectrum-valued functors on the category $\sf{OEpi}_{\leq d}$ of finite-dimensional inner product spaces of dimension at most $d$ and orthogonal epimorphisms, see~\cref{def: OEpi}.
\end{xxthm}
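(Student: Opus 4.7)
My strategy is to set up an adjunction between $\poly{d}(\vect{\bb{k}}, \s)$ and $\Fun(\sf{OEpi}_{\leq d}, \s)$, and show it is an equivalence by induction on $d$, comparing the natural recollements on each side.

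\textbf{Constructing the comparison.} I would define a functor $\Phi : \poly{d}(\vect{\bb{k}}, \s) \to \Fun(\sf{OEpi}_{\leq d}, \s)$ using the orthogonal cross-effects: $\Phi(F)(\bb{k}^n) = \cross_n F(\bb{k}, \ldots, \bb{k})$, with functoriality in orthogonal epimorphisms supplied by the direct-sum decompositions $\bb{k}^n \cong \bb{k}^m \oplus \bb{k}^{n-m}$ they encode, together with the natural structure maps of the cross-effect construction. In the opposite direction, define $\Psi$ to be the left Kan extension along an appropriate embedding $\sf{OEpi}_{\leq d} \hookrightarrow \vect{\bb{k}}$ followed by the $d$-polynomial approximation $P_d$, so that $\Psi(G)$ automatically lies in $\poly{d}$. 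The adjunction $\Psi \dashv \Phi$ then follows by combining the universal property of $P_d$ as a localization with that of Kan extension.

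\textbf{Proof of equivalence via recollement.} I would prove the equivalence by induction on $d$. The case $d = 0$ is essentially tautological, with both sides equivalent to $\s$. For the inductive step, compare the recollement
\[
\homog{d}(\vect{\bb{k}}, \s) \hookrightarrow \poly{d}(\vect{\bb{k}}, \s) \twoheadrightarrow \poly{d-1}(\vect{\bb{k}}, \s)
\]
with its analogue on the functor-category side, whose left-hand term consists of functors supported on $\bb{k}^d$. Since $\Aut_{\sf{OEpi}_{\leq d}}(\bb{k}^d) = O(d)$, the latter is equivalent to $\Fun(BO(d), \s)$, i.e., spectra with $O(d)$-action, which matches the known Weiss classification of $d$-homogeneous functors. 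Establishing the fiber equivalence by these methods is precisely the ``new proof of the classification of homogeneous functors'' advertised in the abstract; combined with the inductive hypothesis at degree $d-1$, a recollement comparison then propagates the equivalence to degree $d$.

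\textbf{Main obstacle.} The technical heart of the argument is showing that $\Phi$ respects both recollements, in particular sending the fiber sequence $D_d F \to P_d F \to P_{d-1} F$ to the corresponding sequence on functor categories and inducing on fibers the expected $O(d)$-equivariant equivalence. Unlike the purely combinatorial surjection structure in Goodwillie's setting, the geometric character of orthogonal direct-sum decompositions forces careful bookkeeping of the $O(n)$-actions and of the compatibility between cross-effects and the transition maps $P_d \to P_{d-1}$. This, rather than the formal construction of the adjunction, is expected to be the most delicate step.
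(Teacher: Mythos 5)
Your proposal takes a genuinely different route from the paper --- an inductive recollement comparison rather than the Morita-theoretic argument the authors use (compactly generated $\Rightarrow$ Schwede--Shipley $\Rightarrow$ identification of the endomorphism category $\es{G}_d$ with $\Sigma^\infty_+\sf{OEpi}_{\leq d}^\op$) --- but there is a concrete gap that, as stated, sinks the inductive step.

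The core issue is the assertion that the equivalence sends $\homog{d}(\vect{\bb{k}},\s)$ to the full subcategory of $\Fun(\sf{OEpi}_{\leq d},\s)$ of functors supported at $\bb{k}^d$, and correspondingly that it sends the fibre sequence $D_dF \to P_dF \to P_{d-1}F$ to the $i_!i^\ast \to \id \to i_\ast i^\ast$ sequence arising from the inclusion $i : \sf{OEpi}_{\leq d-1} \hookrightarrow \sf{OEpi}_{\leq d}$. This is false. Under the equivalence (however one realizes it), the image of $F$ evaluated at $\bb{k}^n$ is the $n$-th Weiss cross-effect $F^{(n)}(\bb{k}^0)$, and the lower cross-effects of a $d$-homogeneous functor are emphatically \emph{not} zero. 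Concretely, take $F = \bb{S}^{d(-)}$, which is $d$-homogeneous; then $F^{(m)}(\bb{k}^0) \simeq \Sigma^\infty_+\Aut(d)/\Aut(d-m)$ for $0 \leq m \leq d$ (this is exactly the computation in Lemma~\ref{lem: equiv of sp categories}), so $F^{(0)}(\bb{k}^0) \simeq \bb{S} \neq 0$ and the corresponding presheaf is nonzero on \emph{all} of $\sf{OEpi}_{\leq d}$, not just at $\bb{k}^d$. The same failure already occurs in the Goodwillie/Dwyer--Rezk situation (e.g.\ for $X \mapsto \Sigma^\infty X^{\wedge d}$, the $n$-th cross-effect at $S^0$ is $\Sigma^\infty_+\mathrm{Surj}([d],[n]) \neq 0$ for all $n \leq d$), so this is not a quirk of the orthogonal setting. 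The paper's closing remark in the polynomial section makes exactly this warning explicit: the diagrams intertwining $P_{d-1}$ with restriction $i^\ast$ do \emph{not} commute, which is precisely what your recollement comparison would require.

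Because the recollements do not match, the statement you identify as ``the technical heart'' is not a difficulty to be overcome with careful bookkeeping --- it is a false statement, and the induction has no base on which to propagate. The construction of $\Phi$ via cross-effects and of $\Psi$ via Kan extension and $P_d$ is reasonable as far as it goes (and is consonant with the paper's Theorem~\ref{main thm: LKE}), but to show the adjunction is an equivalence you need a different global argument. The paper's choice --- showing $\poly{d}$ is compactly generated by $\{\bb{S}^{n(-)}\}_{0\leq n\leq d}$, computing the enriched endomorphism category of these generators via the cross-effect computation, and invoking the $\infty$-categorical Schwede--Shipley theorem --- sidesteps the tower entirely and is robust precisely because it never asks the equivalence to be compatible with $P_{d-1}$.
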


Given the analogy between Weiss calculus and Goodwillie calculus our classification uses the induction functors of Weiss, \cite[Proposition 2.1]{Weiss}, which we call \emph{Weiss cross-effects}. These are geometrically oriented versions of the cross-effects from Goodwillie calculus coming from the `geometrization' of replacing the set of $d$ elements with the Euclidean space $\bb{k}^d$.
In a somewhat twisted turn of fate, the inherent geometric aspects of Weiss calculus which are famous for making it a challenging pursuit end up greatly simplifying many of our arguments while proving~\cref{main thm}.

Barwick, Dotto, Glasman, Nardin and Shah~\cite[Example 10]{BDGNS} conjecture that the stable homotopy theory parametrized by the $\infty$-category $\sf{OEpi}_{\leq d}$ ``controls'' Weiss calculus 
and in particular, $d$-polynomial functors are equivalent to the $\infty$-category of spectral Mackey functors on $\sf{OEpi}_{\leq d}$. \cref{main thm} disproves their conjecture: $d$-polynomial functors are not spectral Mackey functors on $\sf{OEpi}_{\leq d}$, or said differently, $d$-polynomial functors have no interesting transfers.

The category $\sf{OEpi}_{\leq d}$ and it's opposite $\vect{\bb{k}, \leq d}$ (see ~\cref{def: OEpi}) are not equivalent as categories, but we provide a Morita equivalence between them by giving another model for $d$-polynomial functors in terms of left Kan extensions along the inclusion $\vect{\bb{k}, \leq d} \hookrightarrow \vect{\bb{k}}$.

\begin{xxthm}\label{main thm: LKE}
There is an adjoint equivalence
\[
\adjunction{\sf{L}_d}{\Fun(\vect{\bb{k},\leq d},\s)}{\poly{d}(\vect{\bb{k}}, \s)}{\res_{\leq d}}
\]
between the $\infty$-category of spectrum-valued functors on the $\infty$-category of finite-dimensional inner product spaces of dimension at most $d$ and the $\infty$-category of $d$-polynomial functors.
\end{xxthm}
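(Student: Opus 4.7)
The plan is to verify three properties for the adjunction $(\sf{L}_d,\res_{\leq d})$: the unit is an equivalence, $\sf{L}_d$ factors through $\poly{d}(\vect{\bb{k}},\s)$, and the counit is an equivalence on $\poly{d}(\vect{\bb{k}},\s)$. The adjunction itself exists by general Kan extension theory, with $\sf{L}_d$ being the left Kan extension along the inclusion $i:\vect{\bb{k},\leq d}\hookrightarrow\vect{\bb{k}}$. The unit property is automatic from $i$ being fully faithful.

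To show that $\sf{L}_d$ takes values in $\poly{d}(\vect{\bb{k}},\s)$, I would reduce to representables. Namely, $\sf{L}_d$ preserves colimits, the $\infty$-category $\Fun(\vect{\bb{k},\leq d},\s)$ is generated under colimits by the spectrum-valued representables $V\mapsto\Sigma^\infty_+\Map_{\vect{\bb{k},\leq d}}(V_0,V)$ for $V_0$ of dimension at most $d$, and $\poly{d}(\vect{\bb{k}},\s)$ is closed under colimits inside $\Fun(\vect{\bb{k}},\s)$ (as the approximation $\tau_d$ is a homotopy limit of spectra over an essentially finite diagram). It then suffices to observe that $\sf{L}_d$ sends such a representable to the Stiefel-manifold functor $V\mapsto\Sigma^\infty_+\Map_{\vect{\bb{k}}}(V_0,V)$, which is $\dim(V_0)$-polynomial by Weiss's classical computation, hence $d$-polynomial.

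For the counit I would appeal to \cref{main thm}. Its equivalence $\Phi:\poly{d}(\vect{\bb{k}},\s)\xrightarrow{\sim}\Fun(\sf{OEpi}_{\leq d},\s)$ is implemented by the Weiss cross-effects: for $V\in\sf{OEpi}_{\leq d}$, the spectrum $\Phi(F)(V)$ is an iterated total fibre of a cube whose vertices are the values of $F$ on subquotients of $V$, all of dimension at most $d$. Hence $\Phi$ factors as $\Phi=\Phi'\circ\res_{\leq d}$ for some functor $\Phi':\Fun(\vect{\bb{k},\leq d},\s)\to\Fun(\sf{OEpi}_{\leq d},\s)$. Given $F\in\poly{d}(\vect{\bb{k}},\s)$, the restricted counit $\res_{\leq d}(\epsilon_F)$ is an equivalence by the triangle identity (using that the unit is an equivalence), so $\Phi(\epsilon_F)=\Phi'(\res_{\leq d}(\epsilon_F))$ is an equivalence, and because $\Phi$ is an equivalence of $\infty$-categories by \cref{main thm}, $\epsilon_F$ itself is an equivalence.

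The main obstacle is the factorization $\Phi=\Phi'\circ\res_{\leq d}$: one must inspect the precise construction of Weiss cross-effects used in the proof of \cref{main thm} and verify that it only probes $F$ on inner product spaces of dimension at most $d$. The closure of $\poly{d}$ under colimits in $\Fun(\vect{\bb{k}},\s)$ and the polynomiality of Stiefel-manifold functors are classical facts in Weiss calculus which should be invoked precisely, but neither presents a serious difficulty; the rest is essentially formal manipulation of the adjunction.
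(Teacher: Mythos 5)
Your proof is correct and follows the paper's argument closely for the first two steps: fully faithfulness of the inclusion makes the unit of the Kan extension adjunction an equivalence, and the reduction to representables (combined with the fact that $\poly{d}(\vect{\bb{k}},\s)$ is closed under colimits and the Stiefel-manifold functors $\Sigma^\infty_+\vect{\bb{k}}(\bb{k}^m,-)$ are $m$-polynomial for $m\leq d$) shows the left Kan extension lands in $\poly{d}(\vect{\bb{k}},\s)$. Where you diverge is the final step. The paper never mentions the counit explicitly; instead it proves essential surjectivity of the fully faithful functor $\sf{L}_d$ directly, using that the compact generators $\bb{S}^{n(-)}$ ($0\leq n\leq d$) from \cref{lem: poly compactly generated} are built as finite colimits of representables $\Sigma^\infty_+\vect{\bb{k}}(\bb{k}^m,-)$ with $m\leq n$, via the (co)fibre sequences of Weiss. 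You instead argue that the counit is an equivalence by leveraging \cref{main thm}: you observe that the equivalence $\Phi$ of \cref{main thm}, implemented by the Weiss cross-effects $F\mapsto\bigl(\bb{k}^n\mapsto F^{(n)}(\bb{k}^0)\bigr)_{n\leq d}$, only probes $F$ on inner product spaces of dimension at most $d$ and hence factors through $\res_{\leq d}$, then apply the triangle identity and conservativity of $\Phi$. Both finishes are valid. Yours is arguably more formal once the factorization is established, and it makes \cref{main thm: LKE} an explicit consequence of \cref{main thm}; the paper's finish is more self-contained (no dependence on \cref{main thm}) and avoids having to unwind the Schwede--Shipley equivalence to verify the factorization of $\Phi$, which you rightly flag as the point requiring care. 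Note that the factorization does indeed hold: from the defining fibre sequence $F^{(n)}(\bb{k}^0)\to F(\bb{k}^0)\to\lim_{0\neq U\subseteq\bb{k}^n}F(U)$ one sees that $F^{(n)}(\bb{k}^0)$ depends only on the values $F(U)$ for $\dim U\leq n\leq d$.
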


This model is the Weiss calculus version of the well known Morita equivalence between the $\infty$-category of finite pointed sets of cardinality at most $d$ and pointed maps, and the $\infty$-category of finite sets of cardinality at most $d$ and epimorphisms, see e.g.,~\cite[Lemma 8.2, Proposition 8.3]{AronePolynomial}. The Goodwillie version of this story has been known for some time, for instance, a  proof that $d$-excisive functors are determined by their restriction to finite pointed sets of cardinality at most $d$ was given by Arone and Ching~\cite[\S3]{AroneChingcrosseffects} and Lurie~\cite[Corollary 6.1.5.2]{HA}. The Morita equivalence between these two categories of finite sets dates back to Pirashvili~\cite{Pirashvili} and various other versions have been proved e.g., by Helmstutler~\cite{Helmstutler} and Walde~\cite{Walde}.

The models of~\cref{main thm} and~\cref{main thm: LKE} have largely mutually exclusive roles to play. The model of~\cref{main thm} provides a model for polynomial functors in terms of truncated $\sf{OEpi}$-comodules, Koszul dual to the model of Malin and the third author~\cite[Theorem D]{MalinTaggart} in terms of structure reminiscent of truncated `divided power' $K(\sf{OEpi})$-modules. On the other hand, the model of~\cref{main thm: LKE} is expected to be useful for answering convergence questions.

The difference between consecutive polynomial approximations 
\[
D_d F = \fib(P_dF \longrightarrow P_{d-1}F),
\]
is a $d$-homogeneous functor, analogous to monomials of degree $d$ from differential calculus. A celebrated theorem of Weiss~\cite[Theorem 7.3]{Weiss} provides an identification of the $\infty$-category of $d$-homogeneous functors as the $\infty$-category of Borel $\Aut(d)$-spectra, i.e., as the $\infty$-category of spectra with an action of the automorphism group $\Aut(d) = \vect{\bb{k}}(\bb{k}^d, \bb{k}^d)$. The proof technique for~\cref{main thm} may be applied to the $\infty$-category of homogeneous functors to provide a new proof of the classification of $d$-homogeneous spectrum-valued functors.

\begin{xxthm}\label{homogeneous classification}
There is an equivalence 
\[
\homog{d}(\vect{\bb{k}}, \s) \simeq \s^{\Aut(d)}
\]
between the $\infty$-category of $d$-homogeneous functors and the $\infty$-category of Borel $\Aut(d)$-spectra, realised by sending a Borel $\Aut(d)$-spectrum $\Theta$ to the functor $\bb{k}^n \mapsto (S^{\bb{k}^d \otimes \bb{k}^n} \wedge \Theta)_{h\Aut(d)}$.
\end{xxthm}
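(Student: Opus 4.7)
The plan is to adapt the proof strategy of Theorem A to the setting of $d$-homogeneous functors, with the Weiss cross-effects as the central tool.

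I would first verify that the equivalence of Theorem A is compatible with the polynomial tower, meaning the fully faithful inclusion $\poly{d-1}(\vect{\bb{k}}, \s) \hookrightarrow \poly{d}(\vect{\bb{k}}, \s)$ corresponds under $\cross_d$ to restriction along $\sf{OEpi}_{\leq d-1} \hookrightarrow \sf{OEpi}_{\leq d}$. This should be essentially formal because the $k$-th Weiss cross-effect for $k \leq d-1$ depends only on $P_{d-1}F$. A $d$-homogeneous functor $F$ is characterised by $F \simeq P_dF$ together with $P_{d-1}F \simeq \ast$, so under Theorem A it corresponds to a functor $\Phi \colon \sf{OEpi}_{\leq d} \to \s$ whose restriction to $\sf{OEpi}_{\leq d-1}$ is trivial. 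Such a $\Phi$ is determined entirely by the spectrum $\Phi(\bb{k}^d)$ equipped with its $\Aut(d)$-action, since $\bb{k}^d$ is the unique object of $\sf{OEpi}_{\leq d}$ of dimension $d$ and $\sf{OEpi}_{\leq d}(\bb{k}^d, \bb{k}^d) = \Aut(d)$. This immediately yields the abstract equivalence $\homog{d}(\vect{\bb{k}}, \s) \simeq \s^{\Aut(d)}$.

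To recover Weiss's explicit formula, I would verify that the candidate functor $F_\Theta(V) = (S^{\bb{k}^d \otimes V} \wedge \Theta)_{h\Aut(d)}$ is $d$-polynomial with $P_{d-1}F_\Theta \simeq \ast$, and that $\cross_d F_\Theta(\bb{k}^d) \simeq \Theta$ as an $\Aut(d)$-spectrum. Polynomiality and the vanishing of $P_{d-1}$ follow from standard connectivity estimates on $S^{\bb{k}^d \otimes V}$, which grows fast enough in dimension with $V$ to kill all lower cross-effects. For the cross-effect computation, one unwinds the defining colimit over a cube of subspaces of $\bb{k}^d$ and observes that the only non-trivial contribution comes from the top cell, supplying exactly $\Theta$ after passage to homotopy orbits.

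The main obstacle will be the final cross-effect computation: determining $\cross_d F_\Theta(\bb{k}^d)$ requires a careful analysis of the (co)limit defining the Weiss cross-effect and meticulous tracking of the interaction between the $\Aut(d)$-action on $\bb{k}^d$ and the orthogonal action on $V$ in the sphere $S^{\bb{k}^d \otimes V}$. This is precisely the kind of geometric interplay that the authors highlight as being simultaneously responsible for the complexity and the eventual simplicity of Weiss calculus, and threading the equivariance cleanly is the essential difficulty.
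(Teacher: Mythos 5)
There is a genuine gap, and it is precisely the one the paper warns about in the remark following the proof of \cref{main thm: LKE}. Your argument hinges on the claim that, under the equivalence of \cref{main thm}, a $d$-homogeneous functor corresponds to a functor $\Phi : \sf{OEpi}_{\leq d} \to \s$ whose restriction to $\sf{OEpi}_{\leq d-1}$ is trivial; equivalently, you are assuming that the square relating $P_{d-1}$ to restriction along $i : \sf{OEpi}_{\leq d-1} \hookrightarrow \sf{OEpi}_{\leq d}$ commutes. It does not. The equivalence of \cref{main thm} sends $F$ to the presheaf $\bb{k}^m \mapsto F^{(m)}(\bb{k}^0)$, and the Weiss cross-effects of a $d$-homogeneous functor do \emph{not} vanish below dimension $d$: for $F = \bb{S}^{d(-)}$ one computes, as in the proof of \cref{lem: equiv of sp categories}, that $(\bb{S}^{d(-)})^{(m)}(\bb{k}^0) \simeq \Sigma^\infty_+ (\Aut(d)/\Aut(d-m))$, which is nonzero for every $0 \leq m \leq d$. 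In particular the assertion that ``the $k$-th Weiss cross-effect for $k \leq d-1$ depends only on $P_{d-1}F$'' is false; $\bb{S}^{d(-)}$ has $P_{d-1}\bb{S}^{d(-)} \simeq 0$ yet nonvanishing lower cross-effects at $\bb{k}^0$. So there is no identification of $\homog{d}(\vect{\bb{k}}, \s)$ with the full subcategory of $\Fun(\sf{OEpi}_{\leq d}, \s)$ supported on $\bb{k}^d$, and the asserted ``immediate'' abstract equivalence does not follow.

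The paper's proof avoids \cref{main thm} entirely: it first shows that $\bb{S}^{d(-)}$ alone is a compact generator of $\homog{d}(\vect{\bb{k}}, \s)$ (using \cref{lem: d poly with cross effect zero} to detect triviality and the coreflection $D_d$ to handle compactness), and then applies the Schwede--Shipley theorem directly to $\homog{d}(\vect{\bb{k}}, \s)$. The enriched endomorphism object of $\bb{S}^{d(-)}$ is $\Sigma^\infty_+ \Aut(d)$ by the $m = n = d$ case of the computation in \cref{lem: equiv of sp categories}, and \cref{lem: top vs. sp enriched} then converts spectral presheaves on $\Sigma^\infty_+ B\Aut(d)$ into Borel $\Aut(d)$-spectra. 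The explicit formula for the inverse drops out of the enriched coend formula for the Schwede--Shipley left adjoint, with no cubical diagram analysis required. If you want to salvage your approach you would need to replace ``$\Phi|_{\sf{OEpi}_{\leq d-1}}$ is trivial'' with the correct characterisation of the image of $\homog{d}$ inside $\Fun(\sf{OEpi}_{\leq d}, \s)$, but that characterisation is not a vanishing condition and is considerably less tractable than the direct one-generator argument.
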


The methods employed here should provide analogous classification results for polynomial and homogeneous functors in other functor calculi. In particular for Goodwillie calculus of functors from pointed spaces to spectra and for the $\sf{FI}$-calculus of Arro~\cite{Arro}. Moreover it is routine to extend to functors taking values in any presentably stable symmetric monoidal $\infty$-category in place of spectra.

\subsection*{Acknowledgements}
We are grateful to Hadrian Heine for useful conversations relating to the enriched $\infty$-categorical aspects of this work. 
The authors would like to thank the Fondation des Treilles for its support and hospitality during the workshop ``Modelling symmetries using algebra'' and the Isaac Newton Institute for Mathematical Sciences, Cambridge, for support and hospitality during the programme \emph{Equivariant homotopy theory in context}, where work on this paper was undertaken. This work was supported by EPSRC grant EP/Z000580/1 and partially supported by a grant from the Simons Foundation. The second and third author were supported by the Nederlandse Organisatie voor Wetenschappelijk Onderzoek (Dutch Research Council) Vidi grant no VI.Vidi.203.004. In the final stages of preparing this article for publication, the third author was supported by the Engineering and Physical Sciences Research Council under grant number EP/Z534705/1.
We thank the anonymous referee for constructive feedback that provided several improvements to the paper.

\section{Polynomial functors}

We assume some familiarity with Weiss calculus, but recap many of the key features throughout. For comprehensive accounts see~\cite{Weiss, TaggartUC, CarrTaggart}, depending on the reader's choice of skew field $\bb{R}$, $\bb{C}$ or $\bb{H}$, respectively. Denote by $\vect{\bb{k}}$ the $\infty$-category of finite-dimensional inner product spaces over $\bb{k}$ and linear isometric embeddings. A concrete model is given by the topological category with space of morphisms $\sf{Vect}_\bb{k}(\bb{k}^m, \bb{k}^n)$ the Stiefel manifold of $m$-frames in $\bb{k}^n$.

\begin{definition}
A functor $F: \vect{\bb{k}} \to \s$ is \emph{$d$-polynomial} if the canonical map 
\[
F(V) \longrightarrow \lim_{0 \neq U \subseteq \bb{k}^{d+1}}F(V\oplus U)
\]
is an equivalence for each $V \in \vect{\bb{k}}$. We define the $\infty$-category $\poly{d}(\vect{\bb{k}}, \s)$ of $d$-polynomial functors to be the full sub-$\infty$-category of $\Fun(\vect{\bb{k}}, \s)$ spanned by the $d$-polynomial functors. 
\end{definition}

Given a functor $F: \vect{\bb{k}} \to \s$, there is a fibre sequence
\begin{equation}\label{eq: poly iff cross effect vanishes}
 F^{(d+1)}(V) \longrightarrow F(V) \longrightarrow \lim_{0 \neq U \subseteq \bb{k}^{d+1}}F(V\oplus U)   
\end{equation}
in which $F^{(d+1)}$ is the $(d+1)$-st Weiss cross-effect, see~\cite[Proposition 5.3 and \S2]{Weiss}. In the stable setting, a functor is $d$-polynomial if and only if the $(d+1)$-st Weiss cross-effect vanishes. We now collect several lemmas
about how cross-effects and polynomial degree interact that will be useful for us later.

\begin{lem}\label{lem: d poly with cross effect zero}
If  $F: \vect{\bb{k}} \to \s$ is a $d$-polynomial functor for which $F^{(d)}(\bb{k}^0) \simeq 0$, then $F$ is $(d-1)$-polynomial.
\end{lem}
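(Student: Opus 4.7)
The strategy is to translate the problem into a cross-effect vanishing statement. By the fiber sequence~\cref{eq: poly iff cross effect vanishes} and the stability of $\s$, $F$ being $d$-polynomial is equivalent to $F^{(d+1)} \simeq 0$, and $F$ being $(d-1)$-polynomial is equivalent to $F^{(d)} \simeq 0$. So the task is to upgrade the pointwise vanishing $F^{(d)}(\bb{k}^0) \simeq 0$ to the global vanishing $F^{(d)} \simeq 0$.

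The central intermediate claim is that if $F^{(d+1)} \simeq 0$, then $F^{(d)}: \vect{\bb{k}} \to \s$ is itself $0$-polynomial, in the sense that every canonical map $F^{(d)}(V) \to F^{(d)}(V \oplus \bb{k})$ induced by an isometric embedding is an equivalence. To prove this I would factor the defining map $F(V) \to T_d F(V) := \lim_{0 \neq U \subseteq \bb{k}^d} F(V \oplus U)$ as the composition $F(V) \to T_{d+1} F(V) \to T_d F(V)$, where the first arrow is an equivalence by $d$-polynomiality and the second is restriction along the inclusion of indexing posets $\{0 \neq U \subseteq \bb{k}^d\} \hookrightarrow \{0 \neq W \subseteq \bb{k}^{d+1}\}$. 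The fiber-sequence-of-fibers associated to such a composition reads
\[
F^{(d+1)}(V) \longrightarrow F^{(d)}(V) \longrightarrow \fib\bigl(T_{d+1} F(V) \to T_d F(V)\bigr),
\]
so the vanishing of $F^{(d+1)}$ gives $F^{(d)}(V) \simeq \fib(T_{d+1} F(V) \to T_d F(V))$. A combinatorial analysis of the Grassmannian of nonzero subspaces of $\bb{k}^{d+1} = \bb{k}^d \oplus \bb{k}$ — separating those contained in $\bb{k}^d$ from those containing the extra axis $\bb{k}$, and handling the remaining "diagonal" subspaces — should identify this fiber with $F^{(d)}(V \oplus \bb{k})$ compatibly with the canonical map from $F^{(d)}(V)$, yielding the desired $0$-polynomiality.

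Granting $0$-polynomiality of $F^{(d)}$, every isometric embedding $\bb{k}^0 \hookrightarrow V$ factors through a chain of one-dimensional extensions, so the induced map $F^{(d)}(\bb{k}^0) \to F^{(d)}(V)$ is an equivalence for every $V$. The hypothesis $F^{(d)}(\bb{k}^0) \simeq 0$ then forces $F^{(d)} \simeq 0$ identically, and $F$ is $(d-1)$-polynomial. I expect the main obstacle to be the cofinality/combinatorial identification of $\fib(T_{d+1} F \to T_d F)$ with $F^{(d)}(-\oplus \bb{k})$: the transverse nonzero subspaces of $\bb{k}^{d+1}$ split into those containing $\bb{k}$ (which organise naturally into the diagram computing $F^{(d)}(V \oplus \bb{k})$) and $1$-dimensional diagonal subspaces $\bb{k}(w,1)$ for $0 \neq w \in \bb{k}^d$, and a careful cofinality argument is needed to show that the latter do not contribute extra terms to the limit.
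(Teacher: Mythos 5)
Your overall strategy matches the paper's: reduce to showing that $F^{(d)}\simeq 0$, use a fibre sequence relating consecutive cross-effects, and induct upward from the hypothesis $F^{(d)}(\bb{k}^0)\simeq 0$. However, there is a genuine error in your central intermediate claim. You assert that vanishing of $F^{(d+1)}$ forces $F^{(d)}$ to be \emph{$0$-polynomial}, i.e., that the third term of your fibre sequence $F^{(d+1)}(V)\to F^{(d)}(V)\to\fib(T_{d+1}F(V)\to T_dF(V))$ identifies with $F^{(d)}(V\oplus\bb{k})$. This is false. The correct identification, which is the content of Weiss~\cite[Proposition~2.2]{Weiss} (cited by the paper as the fibre sequence~\eqref{eq: cross-effect fibs} with $n=d+1$), is
\[
F^{(d+1)}(\bb{k}^m)\longrightarrow F^{(d)}(\bb{k}^m)\longrightarrow \Omega^{\bb{k}^{d}}F^{(d)}(\bb{k}^{m+1}),
\]
so the equivalence you obtain from $F^{(d+1)}\simeq 0$ is $F^{(d)}(\bb{k}^m)\simeq\Omega^{\bb{k}^d}F^{(d)}(\bb{k}^{m+1})$, not $F^{(d)}(\bb{k}^m)\simeq F^{(d)}(\bb{k}^{m+1})$. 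The loop $\Omega^{\bb{k}^d}$ is precisely what the geometry of $\vect{\bb{k}}$ produces, and it is why the ``cofinality/combinatorial'' analysis you flag as the main obstacle does not come out as cleanly as you hope: the stratification of nonzero subspaces of $\bb{k}^{d+1}$ does not simply contribute $F^{(d)}(V\oplus\bb{k})$ but rather a Thom-space/disc-bundle term, and extracting the loop is the real work in Weiss's Proposition~2.2.

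Fortunately your final conclusion survives this correction: since $\s$ is stable, $\Omega^{\bb{k}^d}X\simeq 0$ if and only if $X\simeq 0$, so the induction from $F^{(d)}(\bb{k}^0)\simeq 0$ to $F^{(d)}(\bb{k}^m)\simeq 0$ for all $m$ goes through exactly as before, giving $F^{(d)}\simeq 0$ and hence $(d-1)$-polynomiality. The fix is therefore small — cite Weiss~\cite[Proposition~2.2]{Weiss} for the fibre sequence (as the paper does) rather than attempting to re-derive it via the $0$-polynomiality claim, which is not merely unproved but actually wrong.
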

\begin{proof}
Weiss cross-effects are related by fibre sequences of the form
\begin{equation}\label{eq: cross-effect fibs}
F^{(n)}(\bb{k}^m) \longrightarrow F^{(n-1)}(\bb{k}^m) \longrightarrow \Omega^{\bb{k}^{n-1}}F^{(n-1)}(\bb{k}^{m+1})
\end{equation}
see e.g.,~\cite[Proposition 2.2]{Weiss}. Since $F$ is $d$-polynomial, the $(d+1)$-st Weiss cross-effect vanishes and so by the $n=d+1$ case of the fibre sequence~\eqref{eq: cross-effect fibs} we have that the map
\[
F^{(d)}(\bb{k}^m) \longrightarrow \Omega^{\bb{k}^{d}}F^{(d)}(\bb{k}^{m+1})
\]
is an equivalence for each $\bb{k}^m$. It follows from the assumption that $F^{(d)}(\bb{k}^0)\simeq 0$ and stability that the $d$-th Weiss cross-effect of $F$ vanishes, and hence by the fibre sequence~\eqref{eq: poly iff cross effect vanishes} that $F$ is $(d-1)$-polynomial. 
\end{proof}

\begin{lem}\label{lem: d poly with cross effects zero}
If  $F: \vect{\bb{k}} \to \s$ is a $d$-polynomial functor for which $F^{(n)}(\bb{k}^0) \simeq 0$ for all $1 \leq n \leq d$, then $F$ is constant.
\end{lem}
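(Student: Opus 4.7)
The plan is to proceed by induction on $d$, iterating~\cref{lem: d poly with cross effect zero} downward on polynomial degree. The hypotheses $F^{(n)}(\bb{k}^0) \simeq 0$ for $1 \leq n \leq d$ are precisely what is needed to apply the previous lemma $d$ times in succession, each application lowering the polynomial degree of $F$ by one.

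For the inductive step, I would suppose the result holds for $d-1$ and let $F$ satisfy the hypotheses for $d$. The case $n = d$ of the vanishing assumption combined with~\cref{lem: d poly with cross effect zero} yields that $F$ is $(d-1)$-polynomial, and the remaining conditions $F^{(n)}(\bb{k}^0) \simeq 0$ for $1 \leq n \leq d-1$ are inherited verbatim, so the inductive hypothesis applies and $F$ is constant.

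The only content not covered by this reduction is the base case $d = 0$: every $0$-polynomial functor is constant. Unwinding the fibre sequence~\eqref{eq: poly iff cross effect vanishes}, and using that the only nonzero subspace of $\bb{k}^1$ is $\bb{k}^1$ itself, being $0$-polynomial says exactly that $F(V) \to F(V \oplus \bb{k})$ is an equivalence for every $V$. Iterating along a flag of one-dimensional extensions $0 \hookrightarrow \bb{k} \hookrightarrow \bb{k}^2 \hookrightarrow \cdots$ shows $F(0) \to F(\bb{k}^n)$ is an equivalence for every $n$, and since every object of $\vect{\bb{k}}$ is isomorphic to some $\bb{k}^n$, the functor $F$ is equivalent to the constant functor at $F(0)$. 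This base case is the only step involving any actual work and I expect it to be the main (very minor) obstacle; the inductive step is essentially a one-line appeal to~\cref{lem: d poly with cross effect zero}.
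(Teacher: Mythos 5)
Your proof is correct and follows essentially the same route as the paper: iterate \cref{lem: d poly with cross effect zero} to drop the polynomial degree to $0$, then observe that a $0$-polynomial functor is constant. The paper simply says ``apply the previous lemma $d$ times'' rather than framing it as a formal induction, and dispatches the base case with a bare citation of the fibre sequence~\eqref{eq: poly iff cross effect vanishes}; your unwinding of the base case (the topological poset of nonzero subspaces of $\bb{k}^1$ is a point, so $0$-polynomial means $F(V)\to F(V\oplus\bb{k})$ is an equivalence, whence $F(0)\simeq F(\bb{k}^n)$ for all $n$) is the same argument, just spelled out.
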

\begin{proof}
Applying~\cref{lem: d poly with cross effect zero} $d$-many times we see that $F$ is $0$-polynomial. From~\eqref{eq: poly iff cross effect vanishes} we see that a $0$-polynomial functor is constant.
\end{proof}

Weiss~\cite[Theorem 6.3]{Weiss}\cite{Weisserrata} proved (in different language) that the inclusion 
\[
\sf{inc}_d: \poly{d}(\vect{\bb{k}},\s) \hookrightarrow \Fun(\vect{\bb{k}}, \s)
\]
admits a left exact left adjoint, hence there is an adjunction
\[
\adjunction{P_d}{\Fun(\vect{\bb{k}}, \s)}{\poly{d}(\vect{\bb{k}}, \s)}{\sf{inc}_d}
\]
realising the $\infty$-category of $d$-polynomial functors as a left exact localization of the $\infty$-category of all functors. This localization is compatible (in the sense of~\cite[Definition 2.2.1.6, Example 2.2.1.7]{HA}) with the Day convolution monoidal structure on $\Fun(\vect{\bb{k}}, \s)$. The proof is a spectral lift of~\cite[Theorem 4.2.0.1]{Hendrian}.

\begin{lem}\label{lem: poly d smashing}
The $d$-polynomial approximation $P_d: \Fun(\vect{\bb{k}}, \s) \to \poly{d}(\vect{\bb{k}}, \s)$ is a smashing localization\footnote{A localization is said to be \emph{smashing} if it preserves colimits.} on $\Fun(\vect{\bb{k}}, \s)$ which is compatible with the Day convolution monoidal structure. In particular, the $\infty$-category of $d$-polynomial functors is a presentably symmetric monoidal stable $\infty$-category with monoidal product given by localized Day convolution. 
\end{lem}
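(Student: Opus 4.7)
The plan is to prove smashing using Weiss's iterated construction of $P_d$, exploiting stability of $\s$ to commute the relevant finite limit with arbitrary colimits, and then to deduce monoidal compatibility from a standard generator argument along the lines of~\cite[Theorem 4.2.0.1]{Hendrian}.

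Set $(T_d F)(V) := \lim_{0 \neq U \subseteq \bb{k}^{d+1}} F(V \oplus U)$, equipped with the canonical natural transformation $\eta \colon \id \to T_d$. By definition $F$ is $d$-polynomial iff $\eta_F$ is an equivalence, and (lifting~\cite{Weiss} to the $\infty$-categorical setting) one has $P_d F \simeq \colim_n T_d^n F$ as a sequential colimit along $\eta$, compatibly with the reflective description recalled above. To show $P_d$ preserves colimits it suffices to show $T_d$ does, since sequential colimits commute with arbitrary colimits. Colimits in $\Fun(\vect{\bb{k}}, \s)$ are pointwise, so for each $V$ each evaluation $F \mapsto F(V \oplus U)$ preserves colimits, and the question reduces to whether the finite limit $\lim_{0 \neq U \subseteq \bb{k}^{d+1}}$ commutes with all small colimits in $\s$. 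This is where stability is crucial: in a stable $\infty$-category any finite limit can be rewritten as a finite colimit via iterated fibre/cofibre identifications, and hence commutes with every colimit. It follows that $\poly{d}(\vect{\bb{k}}, \s)$ is closed under small colimits in $\Fun(\vect{\bb{k}}, \s)$, which is the smashing property.

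For compatibility with Day convolution, I would invoke~\cite[Proposition 2.2.1.9]{HA}: $P_d$ is compatible with $\otimes$ iff the class of $P_d$-equivalences is preserved by $-\otimes G$ for every $G$. Since both $P_d$ and $-\otimes G$ preserve colimits, it is enough to verify this on a generating class of $P_d$-equivalences (closed under colimits and retracts), for instance the maps $\eta_F \colon F \to T_d F$ with $F$ representable, tested against representable $G = h_W$ (which generate $\Fun(\vect{\bb{k}}, \s)$ under colimits). The corresponding check is a coend computation parallel to the one in~\cite[Theorem 4.2.0.1]{Hendrian}, reducing via stability of $\s$ to an interchange of a finite limit with a colimit. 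Combining smashing with compatibility, $\poly{d}(\vect{\bb{k}}, \s)$ inherits the presentably symmetric monoidal structure from $\Fun(\vect{\bb{k}}, \s)$ as a localization, with tensor the localized Day convolution.

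The main obstacle is the coend calculation identifying $\eta_F \otimes h_W$ as a $P_d$-equivalence. In the unstable setting of~\cite{Hendrian} this requires care about connectivity and convergence of the iterated $T_d$-construction; here these issues dissolve, because stability provides the needed exchange of finite limits with colimits for free.
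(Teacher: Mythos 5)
Your proposal is correct and follows essentially the same route as the paper's proof: both identify the smashing property as the only new content beyond~\cite[Theorem 4.2.0.1]{Hendrian}, and both derive it from Weiss's presentation $P_d \simeq \colim_n T_d^n$ together with stability of $\s$. The only cosmetic difference is in the organization: you show directly that $T_d$ preserves all colimits (using that finite limits commute with all colimits in the stable $\Fun(\vect{\bb{k}},\s)$, since $\colim_I$ is exact for any $I$), whereas the paper phrases it as $P_d$ preserving filtered colimits and finite limits, then invokes stability to upgrade to all colimits. These are the same ingredients. One small point worth keeping in mind: the indexing poset $\{0 \neq U \subseteq \bb{k}^{d+1}\}$ is not literally finite, and one needs Weiss's reduction of $T_d$ to a finite (cubical) homotopy limit to apply the commutation argument — the paper's phrase ``compact limits'' is silently doing this work, and your proposal implicitly relies on the same reduction.
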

\begin{proof}
The only part of the proof not contained in \cite[Theorem 4.2.0.1]{Hendrian} is the observation that $P_d$ is smashing. This follows by the construction~\cite[Theorem 6.3]{Weiss} of $P_d$ as a filtered colimit of compact limits, i.e., $P_d$ preserves filtered colimits and finite limits, and hence all colimits.
\end{proof}

\begin{rem}
It is also the case that the $\infty$-category of $d$-polynomial functors is symmetric monoidal with respect to the localized levelwise monoidal product. This can be seen by leveraging results of Hahn and Yuan~\cite{HahnYuan} on the monoidality of the Weiss tower.
\end{rem}

The equivalence of~\cref{main thm} uses the Schwede--Shipley~\cite{SchwedeShipley} identification of stable $\infty$-categories as $\infty$-categories of modules. To apply this to our setting we first need to observe that the $\infty$-category of $d$-polynomial functors is compactly generated, and to specify a set of compact generators.

\begin{lem}\label{lem: poly compactly generated}
The $\infty$-category $\poly{d}(\vect{\bb{k}}, \s)$ of $d$-polynomial functors is compactly generated. In particular, the set 
\[
G_d =\{\bb{S}^{n(-)}: \bb{k}^m \longmapsto \Sigma^\infty S^{\bb{k}^n \otimes \bb{k}^m} \mid 0 \leq n \leq d\}
\]
is a set of compact generators for $\poly{d}(\vect{\bb{k}}, \s)$.
\end{lem}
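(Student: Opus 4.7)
The plan has two distinct parts: an abstract existence argument for compact generation, followed by an identification of the geometric set $G_d$ as an explicit collection of compact generators.

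First, I would show that $\poly{d}(\vect{\bb{k}}, \s)$ is compactly generated in the abstract. The $\infty$-category $\Fun(\vect{\bb{k}}, \s)$ is itself compactly generated, being the $\infty$-category of $\s$-valued functors on the small (topologically enriched) $\infty$-category $\vect{\bb{k}}$; a standard set of compact generators is furnished by the spectral representables $\Sigma^\infty_+ \vect{\bb{k}}(\bb{k}^m, -)$. By~\cref{lem: poly d smashing}, $P_d$ is a smashing localization, and in particular preserves all colimits, hence carries compact objects to compact objects. Consequently $\poly{d}(\vect{\bb{k}}, \s)$ is compactly generated by $\{P_d \Sigma^\infty_+ \vect{\bb{k}}(\bb{k}^m, -)\}_m$.

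Second, I would verify that the geometric set $G_d$ is itself a set of compact generators. For $0 \leq n \leq d$, the sphere functor $\bb{S}^{n(-)}$ is $n$-polynomial: using the splitting $S^{\bb{k}^n \otimes (V \oplus U)} \simeq S^{\bb{k}^n \otimes V} \wedge S^{\bb{k}^n \otimes U}$ and stability, the polynomiality criterion reduces to the equivalence $\bb{S} \to \lim_{0 \neq U \subseteq \bb{k}^{n+1}} \bb{S}^{\bb{k}^n \otimes U}$, a direct cube-of-spheres computation. In particular $\bb{S}^{n(-)} \in \poly{d}(\vect{\bb{k}}, \s)$. The key input is then a Yoneda-type identification of mapping spectra
\[
\map_{\poly{d}}(\bb{S}^{n(-)}, F) \simeq F^{(n)}(\bb{k}^0),
\]
naturally in $F \in \poly{d}(\vect{\bb{k}}, \s)$. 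Granting this, compactness of $\bb{S}^{n(-)}$ is immediate because the right-hand side is defined by a finite diagram of pointwise values of $F$, and both finite limits and filtered colimits commute with pointwise evaluation. Generation is then the content of~\cref{lem: d poly with cross effects zero}: if $\map_{\poly{d}}(\bb{S}^{n(-)}, F) \simeq 0$ for every $0 \leq n \leq d$, then $F(\bb{k}^0) \simeq 0$ and $F^{(n)}(\bb{k}^0) \simeq 0$ for all $1 \leq n \leq d$, forcing $F$ to be constant, and the vanishing at $\bb{k}^0$ then forces $F \simeq 0$.

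The main obstacle is the mapping spectrum identification. Pinning down $\map_{\poly{d}}(\bb{S}^{n(-)}, F)$ as the cross-effect $F^{(n)}(\bb{k}^0)$ requires a careful enriched Yoneda-style argument, combining the spectral end formula $\int_V \Omega^{\bb{k}^n \otimes V} F(V)$ with the explicit cubical model of the Weiss cross-effect, together with the fact that $F$ being $d$-polynomial lets us collapse the end over $\vect{\bb{k}}$ to a finite total-fibre computation on a cube of subspaces of $\bb{k}^n$. The rest of the argument is then essentially formal.
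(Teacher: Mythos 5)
Your proposal is correct and reaches the same conclusion via the same two-step scheme (compactness, then generation by detecting zero objects), and the generation half is essentially identical to the paper's: identify $\Hom(\bb{S}^{n(-)}, F)$ with the cross-effect $F^{(n)}(\bb{k}^0)$ and invoke \cref{lem: d poly with cross effects zero}. The compactness half, however, takes a genuinely different route. The paper shows $\bb{S}^{n(-)}$ is compact already in $\Fun(\vect{\bb{k}}, \s)$ by building it from representables via the cofibre sequences of Weiss's Propositions 1.1--1.2, then uses \cref{lem: poly d smashing} to push compactness into the localization, and uses $n$-polynomiality of $\bb{S}^{n(-)}$ (citing Weiss's Example 5.7) to identify $P_d\bb{S}^{n(-)} \simeq \bb{S}^{n(-)}$. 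You instead first make the cross-effect identification and then observe that $F^{(n)}(\bb{k}^0)$ is a finite limit of pointwise values, so filtered colimits (which are computed pointwise in $\poly{d}$ since $P_d$ preserves them) commute past it. Both work; the paper's version is structurally cleaner because compactness is established before, and independently of, the mapping-spectrum identification, whereas yours loads both compactness and generation onto that single identification. Your opening abstract existence argument (push the representables $\Sigma^\infty_+\vect{\bb{k}}(\bb{k}^m, -)$ through the smashing localization $P_d$) is valid but redundant once $G_d$ is shown to generate. Finally, your ``direct cube-of-spheres computation'' for $n$-polynomiality of $\bb{S}^{n(-)}$ is glib as stated---the limit $\lim_{0\neq U\subseteq\bb{k}^{n+1}}\bb{S}^{\bb{k}^n\otimes U}$ is over the full, non-finite poset of nonzero subspaces, not literally a cube---so you would still need Weiss's reduction of this limit to a finite total fibre, which is exactly the content of the citation the paper makes to Example 5.7; be explicit about this if you fill in the sketch.
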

\begin{proof}
We begin by showing that the objects of $G_d$ are compact objects of $\poly{d}(\vect{\bb{k}}, \s)$. By~\cite[Proposition 2.1]{Weiss} for each $n \in \bb{N}$, there exist an $\infty$-category $\es{J}_n$ and a (non-full) inclusion $\vect{\bb{k}} \hookrightarrow \es{J}_n$ such that the sphere functor $\bb{S}^{n(-)}$ may be identified with the restriction of the functor $\Sigma^\infty\es{J}_n(\bb{k}^0, -)$ to a functor on $\vect{\bb{k}}$.

Using this identification of $\bb{S}^{n(-)}$ with $\Sigma^\infty\es{J}_n(\bb{k}^0, -)$, the cofibre sequences of~\cite[Proposition 1.1, Proposition 1.2]{Weiss} allow one to inductively construct $\bb{S}^{n(-)}$ from representable functors and hence $\bb{S}^{n(-)}$ is compact in $\Fun(\vect{\bb{k}}, \s)$. By~\cref{lem: poly d smashing}, it follows that $P_d\bb{S}^{n(-)}$ is compact in $\poly{d}(\vect{\bb{k}}, \s)$. The functor $\bb{S}^{n(-)}$ is $n$-polynomial (in fact, $n$-homogeneous by~\cite[Example 5.7]{Weiss}) and so $\bb{S}^{n(-)} \simeq P_d\bb{S}^{n(-)}$, and hence $\bb{S}^{n(-)} $ is a compact object of $\poly{d}(\vect{\bb{k}}, \s)$. 

By~\cite[Lemma 2.2.1]{SchwedeShipley} to show that a set of compact objects generates, it suffices to show that maps out of this set of compact objects detect trivial objects. In our case, we must show that if
\[
\Hom_{\poly{d}(\vect{\bb{k}}, \s)}(\bb{S}^{n(-)}, F) \simeq 0
\]
for all $0 \leq n \leq d$, then $F\simeq 0$. Since by~\cite[Proposition 2.1 and Notation 2.5]{Weiss}
\[
\Hom_{\poly{d}(\vect{\bb{k}}, \s)}(\bb{S}^{n(-)}, F) \simeq \Hom_{\Fun(\vect{\bb{k}}, \s)}(\bb{S}^{n(-)}, F)  \simeq F^{(n)}(\bb{k}^0)
\]
this is equivalent to $F^{(n)}(\bb{k}^0)\simeq 0$ for $0 \leq n \leq d$ implying that $F\simeq 0$. By~\cref{lem: d poly with cross effects zero} we have that if $F^{(n)}(\bb{k}^0)\simeq 0$ for $1 \leq n \leq d$ then $F$ is a constant functor. The additional assumption $F^{(0)}(\bb{k}^0) \simeq 0$ completes the proof since $F(\bb{k}^0) = F^{(0)}(\bb{k}^0)$, and so $F$ is the constant functor at zero.    
\end{proof}

\begin{rem}
The compact generators of~\cref{lem: poly compactly generated} are not (weakly) dualizable with respect to neither the Day convolution nor pointwise monoidal structures. Indeed, dualizability would imply that for each $n$ and every $F: \vect{\bb{k}} \to \s$ the map
\[
\underline{\Hom}(\bb{S}^{n(-)}, \bb{1}) \otimes F \longrightarrow \underline{\Hom}(\bb{S}^{n(-)}, F),
\]
is an equivalence. But in either case the left hand side is always zero since the monoidal unit $\bb{1}$ (for both Day convolution and pointwise monoidal structures) is the constant functor at the sphere which has no non-trivial cross-effects whereas many $F$ have non-trivial cross-effects, i.e., the right-hand side is often non-trivial. For a concrete example take $F$ to be $\bb{S}^{m(-)}$ for $m > n$.  In particular, the $\infty$-category of $d$-polynomial functors is not rigidly-compactly generated.
\end{rem}

\begin{rem}
We will see in~\cref{lem: equiv of sp categories} that the full sub-$\infty$-category spanned by $\{ \bb{S}^{n(-)} \mid n \in \bb{N}\}$ is equivalent to $\sf{OEpi} \simeq \vect{\bb{k}}^\op$. It would be reasonable to conjecture that the full sub-$\infty$-category of the $\infty$-category $\Fun(\vect{\bb{k}}, \s)$ compactly generated by the set $\{ \bb{S}^{n(-)} \mid n \in \bb{N}\}$ is the $\infty$-category of polynomial functors (i.e., those functors which are $d$-polynomial for some $d$). This conjecture is false with a particular counterexample given by the functor $\bigvee_{n < \infty} \bb{S}^{n(-)}$. This functor is an object of the $\infty$-category compactly generated by the set $\{ \bb{S}^{n(-)} \mid n \in \bb{N}\}$ since it is a filtered colimit of polynomial functors, but it itself is not $d$-polynomial for any fixed $d$. We thank the anonymous referee for suggesting this counterexample to us.
\end{rem}

\section{The classification of polynomial functors}
The classification of polynomial functors as in~\cref{main thm} will require us to invoke technology in the form of spectrally enriched $\infty$-categories, e.g., the first step in the classification is an $\infty$-categorical enhancement of the classical Schwede--Shipley theorem~\cite{SchwedeShipley}. We choose to work with Hinich's~\cite{Hinichyoneda, Hinichcolimits} model for enriched $\infty$-categories. We recall the key details here but a comprehensive account may be found in work of Reutter and Zetto~\cite{ReutterZetto}, see also~\cite{Heineequivalence} for a comparison of several of the known models.

We will only define the $\infty$-category of spectrally enriched $\infty$-categories with a specified space of objects since this is all we will need in this work. Let $X$ be a space. The category of $X$-quivers $\Fun(X^\op \times X, \s)$ has a monoidal structure given by
\[
(A \otimes B)(x,z) = \underset{y \in X}{\colim}~ A(x,y) \otimes B(y,z)
\]
see e.g.,~\cite[Corollary 2.29 and Observation 2.32]{ReutterZetto}. Similarly to how enriched (1-)categories may be defined as algebras in the (1-)category of quivers, we have the following definition.

\begin{definition}[{\cite[Definition 3.1.1]{Hinichyoneda}}]
Let $X$ be a space. The $\infty$-category of \emph{spectrally enriched $\infty$-categories with space of objects $X$} is the $\infty$-category of (associative) algebra objects in the category $\Fun(X^\op \times X,\s)$ of $X$-quivers.
\end{definition}

Unpacking this definition amounts to the observation that if $X = \sf{ob}(\es{C})$ for some spectrally enriched $\infty$-category $\es{C}$, the hom object
\[
\Hom_{\es{C}}(-,-) : \sf{ob}(\es{C})^\op \times \sf{ob}(\es{C})  \longrightarrow \s, \quad (C, C') \longmapsto \Hom_{\es{C}}(C,C')
\]
is an $X$-quiver and the coherences in composition give us the (associative) algebra structure.

Let $\es{C}$ be a spectrally enriched $\infty$-category with space of objects $X$. There is an equivalence
\[
\Fun(X^\op \times X, \s) \simeq \sf{End}_\s^{L}(\Fun(X^\op, \s)) = \Fun_\s^L(\Fun(X^\op, \s),\Fun(X^\op, \s))
\]
between the $\infty$-category of $X$-quivers and the $\infty$-category of $\s$-linear colimit preserving endofunctors of $\Fun(X^\op, \s)$, see e.g.,~\cite[Definition 2.30]{ReutterZetto}. Under this equivalence, the monoidal structure above corresponds to composition of functors~\cite[Remark 2.31]{ReutterZetto}. It follows that the category of $X$-quivers acts universally from the left on the category 
\[
\Fun(X^\op, \s)\simeq \Fun(X,\s).
\]
As such we may consider the category of left $\es{C}$-modules in $\Fun(X,\s)$. This leads to a definition of the 
$\infty$-category of spectrally enriched presheaves, see~\cite[Corollary 2.29 and Observation 2.39]{ReutterZetto}.

\begin{definition}\label{def: presheaf}
Let $\es{C}$ be a spectrally enriched $\infty$-category with space of objects $X$. The $\infty$-category of spectrally enriched presheaves $\sf{PSh}_\s(\es{C})$ on $\es{C}$ is the $\infty$-category of left $\es{C}$-modules in $\Fun(X,\s)$.
\end{definition}

We can now begin proving~\cref{main thm} using the $\infty$-categorical enhancement~\cite[Theorem 4.1]{AroneBarneaSchlank} of the Schwede--Shipley theorem~\cite[Theorem 3.3.3]{SchwedeShipley}. For this, let $\es{G}_d$ be the full spectrally enriched sub-$\infty$-category of $\Fun(\vect{\bb{k}}, \s)$ spanned by the set $G_d$ of compact generators for $\poly{d}(\vect{\bb{k}}, \s)$ from~\cref{lem: poly compactly generated}.

\begin{lem}\label{lem: SS for poly}
There is an equivalence
\[
\poly{d}({\vect{\bb{k}}}, \s) \xrightarrow{ \ \simeq \ } \sf{PSh}_\s(\es{G}_d)
\]
between the $\infty$-category of $d$-polynomial functors and the $\infty$-category of spectral presheaves on $\es{G}_d$.
\end{lem}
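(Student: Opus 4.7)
The strategy is to appeal directly to the $\infty$-categorical Schwede--Shipley theorem recalled just before the statement, namely~\cite[Theorem 4.1]{AroneBarneaSchlank}. That result says that any stable presentable $\infty$-category $\es{D}$ equipped with a small set $G$ of compact generators is equivalent to the $\infty$-category $\sf{PSh}_\s(\es{G})$ of spectrally enriched presheaves on the full spectral sub-$\infty$-category $\es{G} \subseteq \es{D}$ spanned by $G$. It therefore suffices to verify the three hypotheses---stability, presentability and compact generation by $G_d$---for $\poly{d}(\vect{\bb{k}}, \s)$.

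Stability and presentability follow immediately from~\cref{lem: poly d smashing}: the $\infty$-category $\Fun(\vect{\bb{k}}, \s)$ is stable and presentable, and $P_d$ exhibits $\poly{d}(\vect{\bb{k}}, \s)$ as a (smashing) Bousfield localization of it; both properties are inherited by any accessible reflective localization of a stable presentable $\infty$-category. Compact generation by $G_d$ is precisely the content of~\cref{lem: poly compactly generated}. Feeding these inputs into~\cite[Theorem 4.1]{AroneBarneaSchlank} then yields the claimed equivalence, which under unwinding sends a $d$-polynomial functor $F$ to the spectral presheaf
\[
\bb{S}^{n(-)} \longmapsto \Hom_{\poly{d}(\vect{\bb{k}}, \s)}(\bb{S}^{n(-)}, F) \simeq F^{(n)}(\bb{k}^0),
\]
using the identification from the proof of~\cref{lem: poly compactly generated}.

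Because all of the essential technical work has already been carried out in the two preceding lemmas, there is no genuine obstacle remaining within this lemma; the sole task is to check the hypotheses of a black-box result. The real work, to be done in the subsequent sections, is the identification of the spectral category $\es{G}_d$ (opposite to) $\sf{OEpi}_{\leq d}$, from which~\cref{main thm} will then follow.
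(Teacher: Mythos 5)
Your proof is correct and follows exactly the same route as the paper: invoke the $\infty$-categorical Schwede--Shipley theorem of \cite[Theorem 4.1]{AroneBarneaSchlank}, with compact generation supplied by \cref{lem: poly compactly generated} (and stability/presentability supplied by \cref{lem: poly d smashing}). The paper's proof is a one-line version of yours; the extra verification of hypotheses and the description of the induced presheaf are fine but not strictly necessary to record.
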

\begin{proof}
Combine~\cref{lem: poly compactly generated} with the $\infty$-categorical Schwede--Shipley Theorem of~\cite[Theorem 4.1]{AroneBarneaSchlank}.
\end{proof}

The suspension spectrum functor $\Sigma^\infty_+ : \es{S} \to \s$ induces a left adjoint functor
\[
\Sigma^\infty_+: \sf{Cat}_\infty \longrightarrow \sf{Cat}_\infty^\s
\]
relating $\infty$-categories and spectrally enriched $\infty$-categories, see e.g.,~\cite[Construction 2.36]{ReutterZetto}, such that the space of objects is preserved and on enriched mapping objects
\[
\Hom_{\Sigma^\infty_+\es{C}}(X,Y) \simeq \Sigma^\infty_+ \Hom_\es{C}(X,Y).
\]
We call the image $\Sigma^\infty_+\es{C}$, the free spectrally enriched $\infty$-category generated by $\es{C}$. In~\cref{lem: SS for poly} we reduced the proof of~\cref{main thm} to proving that the spectral $\infty$-category $\es{G}_d$ is equivalent to the free spectrally enriched $\infty$-category generated by $\sf{OEpi}_{\leq d}$, defined below.

\begin{definition}\label{def: OEpi}
Let $\vect{\bb{k}}$ denote the $\infty$-category of finite-dimensional inner product spaces and linear isometric embeddings. The space of morphisms $\vect{\bb{k}}(\bb{k}^m, \bb{k}^n)$ is the Stiefel manifold of $m$-frames in $\bb{k}^n$. We define the $\infty$-category $\sf{OEpi}$ of finite-dimensional inner product spaces and orthogonal epimorphisms to be $\vect{\bb{k}}^\op$. We will denote by $\vect{\bb{k}, \leq d}$ and $\sf{OEpi}_{\leq d}$ the full sub-$\infty$-categories spanned by those inner product spaces with dimension at most $d$. 
\end{definition}

\begin{lem}\label{lem: equiv of sp categories}
There is an equivalence
\[
\Sigma^\infty_+ \sf{OEpi}_{\leq d} \longrightarrow (\es{G}_d)^\op
\]
of spectrally enriched $\infty$-categories.
\end{lem}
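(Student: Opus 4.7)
The plan is to construct the comparison functor $\Phi\colon \Sigma^\infty_+ \sf{OEpi}_{\leq d} \to (\es{G}_d)^\op$ using the free-forgetful adjunction between $\sf{Cat}_\infty$ and $\sf{Cat}_\infty^\s$, and then verify it is fully faithful by computing mapping spectra. Essential surjectivity is automatic, since both $\infty$-categories have the same $(d+1)$-element set of objects, canonically identified with $\{\bb k^0, \bb k^1, \ldots, \bb k^d\}$.

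To construct $\Phi$, observe that the assignment $\bb k^n \mapsto \bb S^{n(-)}$ underlies a topologically enriched functor $\vect{\bb k, \leq d} \to \es{G}_d$: any linear isometric embedding $f\colon \bb k^n \hookrightarrow \bb k^m$ induces, pointwise in $V$, the map $f \otimes \mathrm{id}_V\colon \bb k^n \otimes V \to \bb k^m \otimes V$ and hence an inclusion of spheres $S^{\bb k^n \otimes V} \to S^{\bb k^m \otimes V}$ natural in $V$. Taking opposites and applying the adjunction $\Sigma^\infty_+ \dashv U$ produces the desired enriched functor $\Phi$.

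To verify full faithfulness, it suffices to show that $\Phi$ induces an equivalence of spectra
\[
\Sigma^\infty_+ \vect{\bb k}(\bb k^n, \bb k^m) \longrightarrow \Hom_{\Fun(\vect{\bb k}, \s)}\bigl(\bb S^{n(-)}, \bb S^{m(-)}\bigr)
\]
for each pair $n, m \leq d$. By the representability of the $n$-th Weiss cross-effect already exploited in the proof of \cref{lem: poly compactly generated}, the right-hand side identifies with $(\bb S^{m(-)})^{(n)}(\bb k^0)$. I would compute this cross-effect by induction on $n$ via the fibre sequence \eqref{eq: cross-effect fibs}: the base case $n=0$ gives $(\bb S^{m(-)})^{(0)}(\bb k^0) \simeq \bb S \simeq \Sigma^\infty_+ \vect{\bb k}(\bb k^0, \bb k^m)$, and the inductive step is matched against the classical Stiefel fibration $\vect{\bb k}(\bb k^n, \bb k^m) \to \vect{\bb k}(\bb k^{n-1}, \bb k^m)$ whose fibre is the unit sphere in $\bb k^{m-n+1}$.

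The main obstacle will be upgrading this pointwise identification of spectra to an equivalence of spectrally enriched mapping objects that is compatible with $\Phi$ and with composition. Since both the cross-effect fibre sequence and the Stiefel fibrations are natural in the relevant isometric embeddings, the most expedient route is probably to invoke the cofibre sequences of \cite[Proposition 1.1, Proposition 1.2]{Weiss} cited in \cref{lem: poly compactly generated} to express each $\bb S^{n(-)}$ as an iterated cofibre of the representable spectral functors $\Sigma^\infty_+ \vect{\bb k}(\bb k^k, -)$; for these, mapping spectra out are computed by the enriched Yoneda lemma, so the required naturality and composition bookkeeping becomes essentially formal.
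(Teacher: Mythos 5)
Your overall strategy — construct $\Phi$ via $\bb{k}^n\mapsto\bb{S}^{n(-)}$, get essential surjectivity from the matching object sets, and reduce full faithfulness to identifying the cross-effect $(\bb{S}^{m(-)})^{(n)}(\bb{k}^0)$ with $\Sigma^\infty_+\vect{\bb{k}}(\bb{k}^n,\bb{k}^m)$ — is exactly the paper's. The divergence, and the gap, lie in the cross-effect computation. The paper simply cites Weiss's Example~5.7, which gives the closed form $(\bb{S}^{m(-)})^{(n)}(\bb{k}^0)\simeq\Sigma^\infty_+\Aut(m)/\Aut(m-n)$, combined with the homeomorphism $\vect{\bb{k}}(\bb{k}^n,\bb{k}^m)\cong\Aut(m)/\Aut(m-n)$; that one citation does all the work.

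Your proposed induction on $n$ via \eqref{eq: cross-effect fibs} has a concrete gap. Setting the evaluation point to $\bb{k}^0$, that fibre sequence reads
\[
F^{(n)}(\bb{k}^0)\longrightarrow F^{(n-1)}(\bb{k}^0)\longrightarrow\Omega^{\bb{k}^{n-1}}F^{(n-1)}(\bb{k}^1),
\]
so the third term evaluates the $(n-1)$-st cross-effect at $\bb{k}^1$, not $\bb{k}^0$. An inductive hypothesis tracking $(\bb{S}^{m(-)})^{(n-1)}(\bb{k}^0)$ alone does not control it; to close the induction you would need the identification of $(\bb{S}^{m(-)})^{(k)}(\bb{k}^j)$ for all pairs $(k,j)$ simultaneously, which is exactly what Example~5.7 supplies. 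There is a second wrinkle: the Stiefel fibration $\vect{\bb{k}}(\bb{k}^n,\bb{k}^m)\to\vect{\bb{k}}(\bb{k}^{n-1},\bb{k}^m)$ is a fibre sequence of spaces, and $\Sigma^\infty_+$ does not send it to a fibre sequence of spectra, so matching it against \eqref{eq: cross-effect fibs} term-by-term is not direct. Your fallback — decomposing $\bb{S}^{n(-)}$ into representables via the cofibre sequences of Weiss Propositions~1.1 and~1.2 and applying the enriched Yoneda lemma — is the more viable route, but assembling the resulting iterated fibres into $\Sigma^\infty_+\vect{\bb{k}}(\bb{k}^n,\bb{k}^m)$ effectively re-derives Example~5.7. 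Finally, the compatibility worry you raise is moot: since $\Phi$ is already built as an enriched functor, compatibility with composition is part of its data, and full faithfulness only requires checking that the induced maps on mapping spectra are equivalences, which is all the paper checks.
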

\begin{proof}
The functor is easier to describe on opposite categories. On opposite categories it is given by
\[
\bb{S}^{\ast(-)} : \Sigma^\infty_+ \vect{\bb{k}, {\leq d}} \longrightarrow \es{G}_d, \quad \bb{k}^n \longmapsto \bb{S}^{n(-)}
\]
on objects and sends an isometric embedding $\bb{k}^m \to \bb{k}^n$ to the induced natural transformation
\[
\bb{S}^{m(-)} = \bb{S}^{\bb{k}^m \otimes -} \longrightarrow \bb{S}^{\bb{k}^n \otimes -} = \bb{S}^{n(-)}.
\]
To see that this assignment defines a spectrally enriched functor, observe that in this case a spectrally enriched functor is by \cref{def: presheaf} (see also, \cite[6.1.2, 6.1.3]{Hinichyoneda}) a $\Sigma^\infty_+ \vect{\bb{k}}$-module in the $\infty$-category of functors $\Fun(\Sigma^\infty_+ \vect{\bb{k}},\Fun(\vect{\bb{k}}, \s))$. The assignment above provides the underlying object of the module, and the associative and unital module structure maps are given by 
\[
\Sigma^\infty (\vect{\bb{k}}(\bb{k}^m, \bb{k}^n)_+ \wedge S^{\bb{k}^m \otimes (-)} \longrightarrow S^{\bb{k}^n \otimes (-)})
\]
which are continuous since the one-point compactification functor is a well-defined functor $\vect{\bb{k}} \to \es{S}_\ast$.  We will abuse notation and denote the induced functor on opposite categories by 
\[
\bb{S}^{\ast(-)} : \Sigma^\infty_+ \sf{OEpi}_{\leq d} \longrightarrow (\es{G}_d)^\op.
\]

To see that the functor $\bb{S}^{\ast(-)} $ is an equivalence of spectrally enriched $\infty$-categories it suffices to show that it is essentially surjective and fully faithful. Essential surjectivity follows from the observation that the functor $\bb{S}^{\ast(-)}$ induces a bijection on the space of objects: both spaces of objects are equivalent to the set of $d+1$ elements.

For fully faithful, we must show that the induced map on mapping spectra
\[
\Sigma^\infty_+\vect{\bb{k}}(\bb{k}^m, \bb{k}^n) \longrightarrow \Hom_{\Fun(\vect{\bb{k}}, \s)}(\bb{S}^{m(-)}, \bb{S}^{n(-)})
\]
is an equivalence. By definition, the right-hand side of this equivalence may be identified with $(\bb{S}^{n(-)})^{(m)}(\bb{k}^0)$, the $m$-th Weiss cross-effect of $\bb{S}^{n(-)}$ evaluated at $\bb{k}^0$. This cross-effect may be computed as in ~\cite[Example 5.7]{Weiss} as
\[
(\bb{S}^{n(-)})^{(m)}(\bb{k}^0) \simeq (\Sigma^\infty_+ \Aut(n) \wedge S^{\bb{k}^n \otimes \bb{k}^0})_{h\Aut(n-m)} \simeq \Aut(n)/\Aut(n-m) \cong \vect{\bb{k}}(\bb{k}^m, \bb{k}^n).
\]
The statement that the induced map on mapping spectra is an equivalence follows from carefully tracing through the computation of ~\cite[Example 5.7]{Weiss}.
\end{proof}

Fully-faithfulness of the above functor may also be deduced from~\cite[Theorem 3.2]{AroneDwyerLesh} which gives a formula for computing natural transformations between homogeneous functors. By examining the proof of loc. cit. in the special case that the homogeneous functors are of the form $\bb{S}^{n(-)}$ one readily sees that loc. cit. reduces to \cite[Example 5.7]{Weiss}.

The final step in the proof of~\cref{main thm} will be showing that for an $\infty$-category $\es{C}$, the spectrally enriched $\infty$-category $\Sigma^\infty_+\es{C}$ really is the free spectrally enriched $\infty$-category generated by $\es{C}$. This is another application of the Schwede-Shipley theorem.

\begin{lem}\label{lem: top vs. sp enriched}
Let $\es{C}$ be an $\infty$-category. There is an equivalence of $\infty$-categories
\[
\Fun_\s(\Sigma^\infty_+\es{C}, \s) \simeq \Fun(\es{C}, \s)
\]
between the $\infty$-category of spectrally enriched functors on $\Sigma^\infty_+\es{C}$ and the $\infty$-category of spectrum-valued functors on $\es{C}$. 
\end{lem}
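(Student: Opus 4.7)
The strategy is to obtain the equivalence as an instance of a 2-adjunction between $\sf{Cat}_\infty$ and $\sf{Cat}_\infty^\s$. The lax symmetric monoidal lift of $\es{V} \mapsto \sf{Cat}_\infty^\es{V}$ to presentable $(\infty,2)$-categories recalled just before the lemma implies that the strong monoidal left adjoint $\Sigma^\infty_+ : \es{S} \to \s$ induces a left adjoint $2$-functor $\Sigma^\infty_+ : \sf{Cat}_\infty \to \sf{Cat}_\infty^\s$; its right adjoint is given by change of enrichment along the lax monoidal right adjoint $\Omega^\infty : \s \to \es{S}$, sending a spectrally enriched $\infty$-category $\es{D}$ to the $\infty$-category $\es{D}_0$ with the same objects and mapping spaces $\Map_{\es{D}_0}(X,Y) \coloneqq \Omega^\infty \Hom_\es{D}(X,Y)$.

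Given this data, the plan is to apply the universal property of the 2-adjunction on hom $\infty$-categories. For every $\infty$-category $\es{C}$ and every spectrally enriched $\infty$-category $\es{D}$, the adjunction at the $(\infty,2)$-level yields a natural equivalence
\[
\Fun_\s(\Sigma^\infty_+\es{C}, \es{D}) \simeq \Fun(\es{C}, \es{D}_0).
\]
I would then specialize to $\es{D} = \s$, regarded as canonically spectrally enriched over itself via its internal hom, and identify $\s_0 \simeq \s$: by construction $\Map_{\s_0}(X,Y) \simeq \Omega^\infty \Hom_\s(X,Y) \simeq \Map_\s(X,Y)$, and composition is transported in the obvious way. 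Concatenating gives the asserted equivalence $\Fun_\s(\Sigma^\infty_+ \es{C}, \s) \simeq \Fun(\es{C}, \s)$.

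The main obstacle is extracting the equivalence of \emph{functor $\infty$-categories} (not merely spaces of functors) from the abstract $2$-adjunction, since this is precisely the content of the 2-categorical enhancement: an ordinary adjunction of underlying $\infty$-categories would only give an equivalence of mapping spaces, whereas the $2$-adjunction provided by the combination of~\cite[Corollary 6.13]{Heineequivalence} and~\cite[Theorem 5.10]{Heineweighted} promotes this to an equivalence of internal hom $\infty$-categories of $2$-morphisms, which are exactly the enriched and unenriched functor $\infty$-categories appearing in the statement. A secondary, essentially bookkeeping obstacle is verifying that the canonical spectral enrichment of $\s$ is preserved by the change-of-enrichment $(-)_0$; this is a standard feature of any presentably symmetric monoidal stable $\infty$-category, since its self-enrichment is constructed from the internal hom, whose underlying mapping space is by definition $\Omega^\infty$ of the internal hom spectrum.
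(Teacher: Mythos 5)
Your proposal is correct and follows essentially the same route as the paper: both invoke the left adjoint $2$-functor $\Sigma^\infty_+\colon \sf{Cat}_\infty \to \sf{Cat}_\infty^\s$ coming from the $(\infty,2)$-categorical enhancement recalled before the lemma, apply the $2$-adjunction on hom $\infty$-categories to get $\Fun_\s(\Sigma^\infty_+\es{C},\s)\simeq\Fun(\es{C},(\Sigma^\infty_+)^R\s)$, and then identify the right adjoint applied to $\s$ with the $\infty$-category of spectra. The only difference is cosmetic: you spell out that the right adjoint is change of enrichment along $\Omega^\infty$, where the paper phrases it as taking the underlying $\infty$-category.
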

\begin{proof}
The $\infty$-category $\Fun(\es{C}, \s)$ is compactly generated by the representable functors and hence the $\infty$-categorical Schwede-Shipley Theorem~\cite[Theorem 4.1]{AroneBarneaSchlank} implies that there is an equivalence
\[
\Fun(\es{C}, \s) \simeq \sf{PSh}_{\s}(\langle \Sigma^\infty_+ \es{C}(C,-) \mid C \in \es{C}\rangle)
\]
between the $\infty$-category $\Fun(\es{C}, \s)$ and the $\infty$-category of spectrally enriched presheaves on the full spectrally enriched sub-$\infty$-category $\langle \Sigma^\infty_+ \es{C}(C,-) \mid C \in \es{C}\rangle$ of $\Fun(\es{C}, \s)$ spanned by the representable functors. The enriched Yoneda embedding (see e.g.,~\cite[Theorem 5.5]{ReutterZetto} and~\cite[6.2.7]{Hinichyoneda}) provides a functor
\[
\Sigma^\infty_+ \es{C}^\op \longrightarrow \langle \Sigma^\infty_+ \es{C}(C,-) \mid C \in \es{C}\rangle
\]
which is an equivalence of spectrally enriched $\infty$-categories. Fully faithfulness follows from the fully faithfulness of the enriched Yoneda embedding and essential surjectivity follows since both spaces of objects may be identified with the objects of $\es{C}$. Taking spectral presheaves provides the desired equivalence 
\[
\Fun_\s(\Sigma^\infty_+\es{C}, \s) = \sf{PSh}_\s(\Sigma^\infty_+ \es{C}^\op) \simeq \sf{PSh}_\s(\langle\Sigma^\infty_+ \es{C}(C,-) \mid C \in \es{C}\rangle) \simeq \Fun(\es{C}, \s).   \qedhere
\]\qedhere
\end{proof}

We now prove~\cref{main thm} by essentially combining the previous three lemmas.

\begin{thm}[\cref{main thm}]
There is an equivalence of $\infty$-categories
\[
\poly{d}(\vect{\bb{k}}, \s) \simeq \Fun(\sf{OEpi}_{\leq d}, \s)
\]
between the $\infty$-category of $d$-polynomial functors and the $\infty$-category of spectrum-valued functors on $\sf{OEpi}_{\leq d}$.
\end{thm}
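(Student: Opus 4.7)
The strategy is essentially to chain together the three preceding lemmas into a single sequence of equivalences, since by this point all of the conceptual work has been done. All that remains is to check that the identifications compose correctly and that the variance (opposite-category) bookkeeping is consistent between the Schwede--Shipley identification, the definition of spectrally enriched presheaves, and the comparison with the free spectrally enriched $\infty$-category.

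The plan is to start from \cref{lem: SS for poly}, giving
\[
\poly{d}(\vect{\bb{k}}, \s) \simeq \sf{PSh}_\s(\es{G}_d),
\]
then unwind the definition of spectrally enriched presheaves as left $\es{G}_d$-modules to rewrite this as the $\infty$-category $\Fun_\s(\es{G}_d^\op, \s)$ of spectrally enriched functors out of the opposite $\infty$-category. Next I would apply \cref{lem: equiv of sp categories}, which produces an equivalence of spectrally enriched $\infty$-categories $\Sigma^\infty_+ \sf{OEpi}_{\leq d} \simeq \es{G}_d^\op$; composing with the previous step yields
\[
\poly{d}(\vect{\bb{k}}, \s) \simeq \Fun_\s\bigl(\Sigma^\infty_+ \sf{OEpi}_{\leq d},\, \s\bigr).
\]
Finally, an application of \cref{lem: top vs. sp enriched} with $\es{C} = \sf{OEpi}_{\leq d}$ converts the right-hand side into $\Fun(\sf{OEpi}_{\leq d}, \s)$, completing the chain.

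The only non-routine point to verify is that the equivalences compose naturally: the Schwede--Shipley identification of $\poly{d}(\vect{\bb{k}}, \s)$ with presheaves on $\es{G}_d$ is the one induced by the enriched Yoneda embedding sending a functor $F$ to the $\es{G}_d$-module $\Hom_{\Fun(\vect{\bb{k}},\s)}(-,F)$, and I would trace through this identification to confirm that the composed equivalence restricts a polynomial functor $F$ to the spectrum-valued functor on $\sf{OEpi}_{\leq d}$ given by $\bb{k}^n \mapsto F^{(n)}(\bb{k}^0)$, matching the Weiss cross-effect computation carried out in the proof of \cref{lem: equiv of sp categories}. The main potential obstacle is purely bookkeeping: ensuring that the natural $\es{G}_d$-action used in the Schwede--Shipley equivalence agrees, under the equivalence with $\Sigma^\infty_+\sf{OEpi}_{\leq d}^\op$, with the action of post-composition with isometries, but this is forced by functoriality of the enriched Yoneda embedding and the explicit description of the comparison functor $\bb{S}^{\ast(-)}$ already given.
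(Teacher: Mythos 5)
Your proposal is correct and follows essentially the same route as the paper's proof: chain \cref{lem: SS for poly}, \cref{lem: equiv of sp categories}, and \cref{lem: top vs. sp enriched}, with the only intermediate step being the identification of $\sf{PSh}_\s(\es{G}_d)$ with enriched functors out of $\es{G}_d^\op$ (the paper cites a Reutter--Zetto lemma for the statement that an equivalence of spectrally enriched $\infty$-categories induces one on enriched functor categories, where you unwind the module definition). Your closing remark that the composed equivalence sends $F$ to $\bb{k}^n \mapsto F^{(n)}(\bb{k}^0)$ is a useful addition the paper leaves implicit.
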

\begin{proof}
By~\cref{lem: SS for poly} we may identify the $\infty$-category of $d$-polynomial functors with the $\infty$-category of spectral presheaves of $\es{G}_d$, where $\es{G}_d$ is the enriched $\infty$-category generated by the set of compact generators for the $\infty$-category of $d$-polynomial functors. By~\cref{lem: equiv of sp categories}, there is an equivalence of spectrally enriched $\infty$-categories
\[
\Sigma^\infty_+ \sf{OEpi}_{\leq d} \longrightarrow (\es{G}_d)^\op
\]
and an application of~\cite[Lemma 6.45]{ReutterZetto} asserts that an equivalence of spectrally enriched categories induces an equivalence of enriched functor categories. It follows that the $\infty$-category of $d$-polynomial functors is equivalent to spectrally enriched functors on $\Sigma^\infty_+ \sf{OEpi}_{\leq d}$. The proof is complete upon an application of~\cref{lem: top vs. sp enriched}.
\end{proof}

We conclude this section by providing another model for the $\infty$-category of $d$-polynomial functors in terms of vector spaces of dimension at most $d$ by showing that a $d$-polynomial functor is equivalent to the left Kan extension of its restriction to $\vect{\bb{k}, \leq d}$. This provides a Morita equivalence between $\vect{\bb{k}, \leq d}$ and it's opposite $\sf{OEpi}_{\leq d}$ through~\cref{main thm}. 

\begin{thm}[\cref{main thm: LKE}]\label{thm: LKE model}
There is an adjoint equivalence
\[
\adjunction{\sf{L}_d}{\Fun(\vect{\bb{k},\leq d},\s)}{\poly{d}(\vect{\bb{k}}, \s)}{\res_{\leq d}}
\]
between the $\infty$-category of spectrum-valued functors on the $\infty$-category of finite-dimensional inner product spaces of dimension at most $d$ and the $\infty$-category of $d$-polynomial functors.
\end{thm}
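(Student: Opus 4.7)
The plan is to invoke the $\infty$-categorical Barr--Beck--Lurie monadicity theorem for the adjunction $\sf{L}_d \dashv \res_{\leq d}$. Here $\sf{L}_d := P_d \circ i_!$ is defined as the composite of the two left adjoints $i_! \dashv i^* = \res_{\leq d}$ (left Kan extension along the inclusion $i: \vect{\bb{k}, \leq d} \hookrightarrow \vect{\bb{k}}$) and $P_d \dashv \sf{inc}_d$ (polynomial approximation); both sides are presentable stable $\infty$-categories.

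I would then verify that $\res_{\leq d}: \poly{d}(\vect{\bb{k}}, \s) \to \Fun(\vect{\bb{k}, \leq d}, \s)$ is conservative and preserves colimits. By \cref{lem: poly d smashing}, $\poly{d}(\vect{\bb{k}}, \s)$ is closed under colimits in $\Fun(\vect{\bb{k}}, \s)$, so colimits are computed pointwise and are evidently preserved by $\res_{\leq d}$. Conservativity follows as in the final paragraph of the proof of \cref{lem: poly compactly generated}: if $G \in \poly{d}$ vanishes on $\vect{\bb{k}, \leq d}$, then each Weiss cross-effect $G^{(n)}(\bb{k}^0)$ for $1 \leq n \leq d$ vanishes (since by an iterated use of the fibre sequence \eqref{eq: cross-effect fibs} it depends only on the values $G(\bb{k}^j)$ for $j \leq n \leq d$), so by \cref{lem: d poly with cross effects zero} the functor $G$ is constant, and $G(\bb{k}^0) \simeq 0$ forces $G \simeq 0$. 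Barr--Beck--Lurie then gives that $\res_{\leq d}$ is monadic.

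To upgrade monadicity to the claimed adjoint equivalence, it suffices to show the unit $\eta: \id \to \res_{\leq d}\,\sf{L}_d$ is a natural equivalence (the monad is then trivial; conservativity of $\res_{\leq d}$ together with the triangle identities upgrades the unit equivalence to a counit equivalence, giving an honest adjoint equivalence). Since both $\sf{L}_d$ and $\res_{\leq d}$ preserve colimits, it is enough to check $\eta$ on the set of compact generators $F_n := \Sigma^\infty_+ \Hom_{\vect{\bb{k}, \leq d}}(\bb{k}^n, -)$ of $\Fun(\vect{\bb{k}, \leq d}, \s)$ for $0 \leq n \leq d$. Full faithfulness of $i$ identifies $i_! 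F_n$ with the extended representable $\Sigma^\infty_+ \Hom_{\vect{\bb{k}}}(\bb{k}^n, -) \in \Fun(\vect{\bb{k}}, \s)$, and $\eta_{F_n}$ becomes the map $F_n \to (P_d\, i_! F_n)|_{\vect{\bb{k}, \leq d}}$, which is an equivalence precisely when $P_d$ acts trivially on $i_! F_n$.

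The main obstacle is the residual classical computation: the representable $\Sigma^\infty_+ \Hom_{\vect{\bb{k}}}(\bb{k}^n, -)$ is itself $n$-polynomial, hence $d$-polynomial for $n \leq d$, hence fixed by $P_d$. I expect this to follow by induction on $n$ using the sphere bundle fibration $\Hom(\bb{k}^n, V) \to \Hom(\bb{k}^{n-1}, V)$ (with fibre over an $(n{-}1)$-frame the unit sphere in its orthogonal complement), combined with the cofibre sequences of \cite[Proposition 1.1, Proposition 1.2]{Weiss} already invoked in the proof of \cref{lem: poly compactly generated}; these sequences connect such representables to the homogeneous functors $\bb{S}^{n(-)}$, which are $n$-polynomial (indeed $n$-homogeneous) by \cite[Example 5.7]{Weiss}, and closure of $\poly{n}(\vect{\bb{k}}, \s)$ under cofibres bootstraps the argument.
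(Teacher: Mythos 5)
Your argument is correct and arrives at the same theorem by a genuinely different route. The paper first observes that, because the representable functors $\Sigma^\infty_+\vect{\bb{k}}(\bb{k}^m, -)$ are $m$-polynomial for $m \leq d$ and $\poly{d}$ is closed under colimits, the left Kan extension $i_!$ \emph{already lands in} $\poly{d}(\vect{\bb{k}}, \s)$ — so $\sf{L}_d$ is simply $i_!$ corestricted, with no post-composition by $P_d$ needed. Full faithfulness of $\sf{L}_d$ is then immediate from full faithfulness of $i$, and the paper finishes by checking \emph{essential surjectivity} directly: the compact generators $\bb{S}^{n(-)}$ of $\poly{d}$ (\cref{lem: poly compactly generated}) lie in the essential image because each is built from representables $\Sigma^\infty_+\vect{\bb{k}}(\bb{k}^m,-)$ with $m \leq n$ via the cofibre sequences of Weiss. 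You instead set $\sf{L}_d := P_d \circ i_!$ and replace essential surjectivity by \emph{conservativity} of $\res_{\leq d}$, proved through the cross-effect fibre sequences \eqref{eq: cross-effect fibs} and \cref{lem: d poly with cross effects zero}. Both approaches need the same classical input (Weiss's polynomiality of the representables, which the paper cites as~\cite[Propositions 1.1, 1.2, Example 5.8]{Weiss} rather than reproving), and each exploits the Weiss cross-effects, just on opposite sides of the adjunction: the paper through compact generation of the target, you through detection of equivalences by restriction.

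Two minor remarks on your write-up. First, the Barr--Beck--Lurie invocation is superfluous: your parenthetical ``conservativity plus triangle identities'' argument is a complete elementary proof that a fully faithful left adjoint with conservative right adjoint is an equivalence, so monadicity is doing no work. Second, in your final paragraph the induction requires closure of $\poly{n}$ under \emph{fibres} rather than cofibres — you know the cofibre (a shift of $\bb{S}^{n(-)}$, $n$-homogeneous) and the target $\Sigma^\infty_+\vect{\bb{k}}(\bb{k}^{n-1},-)$ (inductively $(n-1)$-polynomial), and deduce the source is $n$-polynomial. Since $\poly{n}$ is a stable subcategory this is harmless, but the phrasing is backwards.
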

\begin{proof}
The fully faithful inclusion of $\infty$-categories $\vect{\bb{k},\leq d} \hookrightarrow \vect{\bb{k}}$ induces an adjoint pair
\[
\adjunction{\sf{L}_d}{\Fun(\vect{\bb{k},\leq d},\s)}{\Fun(\vect{\bb{k}}, \s)}{\res_{\leq d}}
\]
given by left Kan extension and restriction. Since the inclusion is fully faithful, the unit of the adjunction is an equivalence. 

The $\infty$-category of $d$-polynomial functors is closed under colimits since it is closed under filtered colimits and finite colimits, the latter by stability. Let $F$ be a functor in the essential image of the left Kan extension. By the coend formula for left Kan extensions it follows that $F$ is a colimit of functors of the form $X \wedge \Sigma^\infty_+\vect{\bb{k}}(\bb{k}^m, -)$ for $0 \leq m \leq d$ and $X \in \s$. The (co)fibre sequences of~\cite[Proposition 1.1, Proposition 1.2]{Weiss} together with~\cite[Example 5.8]{Weiss} imply that the representable functor $\Sigma^\infty_+\vect{\bb{k}}(\bb{k}^m, -)$ is $m$-polynomial. It follows that functors of the form $X \wedge \Sigma^\infty_+\vect{\bb{k}}(\bb{k}^m, -)$ for $0 \leq m \leq d$ are $d$-polynomial and hence the left Kan extension factors through the $\infty$-category of $d$-polynomial functors. This produces an adjunction
\[
\adjunction{\sf{L}_d}{\Fun(\vect{\bb{k},\leq d},\s)}{\poly{d}(\vect{\bb{k}}, \s)}{\res_{\leq d}}
\]
for which the unit remains an equivalence since the $\infty$-category of $d$-polynomial functors is a full subcategory of the $\infty$-category of all functors. In particular, the left Kan extension is fully faithful. 

To see that the left Kan extension induces an equivalence, it suffices to show that it is essentially surjective. For this, it suffices by~\cref{lem: poly compactly generated} and the fact that the left Kan extension commutes with colimits to show that the functors $\bb{S}^{n(-)}$ for $0 \leq n \leq d$ are in the essential image of the left Kan extension functor. This again follows from the cofibre sequences~\cite[Proposition 1.1, Proposition 1.2]{Weiss} since tracking through the construction of $\bb{S}^{n(-)}$ as a finite colimit of representable functors shows that it only requires the representable functors $\Sigma^\infty_+\vect{\bb{k}}(\bb{k}^m, -)$ for $0 \leq m \leq n$.
\end{proof}

\begin{rem}
The models of \cref{main thm} and \cref{main thm: LKE} are not compatible with the Weiss tower. For instance, with respect to~\cref{main thm}, the diagrams
\[\begin{tikzcd}[column sep=1em]
	{\poly{d}_\ast(\vect{\bb{k}}, \s)} & {\Fun(\sf{OEpi}_{\leq d},\s)} & {\poly{d}_\ast(\vect{\bb{k}}, \s)} & {\Fun(\sf{OEpi}_{\leq d},\s)} \\
	{\poly{d-1}_\ast(\vect{\bb{k}}, \s)} & {\Fun(\sf{OEpi}_{\leq (d-1)},\s)} & {\poly{d-1}_\ast(\vect{\bb{k}}, \s)} & {\Fun(\sf{OEpi}_{\leq (d-1)},\s)}
	\arrow["\simeq", tail reversed, no head, from=1-1, to=1-2]
	\arrow["{P_{d-1}}"', from=1-1, to=2-1]
	\arrow["{i^\ast}", from=1-2, to=2-2]
	\arrow["\simeq", from=1-3, to=1-4]
	\arrow["{P_{d-1}}"', from=1-3, to=2-3]
	\arrow["{i^\ast}", from=1-4, to=2-4]
	\arrow["\simeq", tail reversed, no head, from=2-1, to=2-2]
	\arrow["\simeq"', from=2-3, to=2-4]
\end{tikzcd}\]
in which $i : \sf{OEpi}_{\leq (d-1)} \hookrightarrow \sf{OEpi}_{\leq d}$ is the fully faithful inclusion \emph{do not} commute. For the left-most diagram, this follows by considering the diagram of right adjoints which can be seen to not commute by computing the right Kan extension along $i$. Commutativity of the right-most diagram would imply that $P_{d-1}F(\bb{k}^0) \simeq F(\bb{k}^0)$, which is not the case, e.g., for $F = \bb{S}^{d(-)}$, an equivalence $P_{d-1}F(\bb{k}^0) \simeq F(\bb{k}^0)$ would assert that the sphere spectrum is trivial or equivalently that the stable homotopy groups of spheres are zero. For the model of~\cref{main thm: LKE}, the diagrams
\[\begin{tikzcd}[column sep=1em]
	{\poly{d}_\ast(\vect{\bb{k}}, \s)} & {\Fun(\vect{\bb{k}, \leq d},\s)} & {\poly{d}_\ast(\vect{\bb{k}}, \s)} & {\Fun(\vect{\bb{k}, \leq d},\s)} \\
	{\poly{d-1}_\ast(\vect{\bb{k}}, \s)} & {\Fun(\vect{\bb{k}, \leq (d-1)},\s)} & {\poly{d-1}_\ast(\vect{\bb{k}}, \s)} & {\Fun(\vect{\bb{k}, \leq (d-1)},\s)}
	\arrow["\simeq", tail reversed, no head, from=1-1, to=1-2]
	\arrow["{P_{d-1}}"', from=1-1, to=2-1]
	\arrow["{i^\ast}", from=1-2, to=2-2]
	\arrow["\simeq", from=1-3, to=1-4]
	\arrow["{P_{d-1}}"', from=1-3, to=2-3]
	\arrow["{i^\ast}", from=1-4, to=2-4]
	\arrow["\simeq", tail reversed, no head, from=2-1, to=2-2]
	\arrow["\simeq"', from=2-3, to=2-4]
\end{tikzcd}\]
\emph{do not} commute for analogous reasons. There are of course many more choices of what one could mean by ``compatible'' but those above appear to be the only choices which would have applications, e.g., to the classification of homogeneous functors. 
\end{rem}

\section{Homogeneous functors}
A functor $F: \vect{\bb{k}} \to \s$ is $d$-homogeneous if it is $d$-polynomial and $P_{d-1}F \simeq 0$. Define the $\infty$-category $\homog{d}(\vect{\bb{k}}, \s)$ of $d$-homogeneous functors to be the full sub-$\infty$-category of $\poly{d}(\vect{\bb{k}}, \s)$ spanned by the $d$-homogeneous functors. The primary example of a $d$-homogeneous functor is the $d$-th layer
\[
D_dF = \fib(P_dF \longrightarrow P_{d-1}F)
\]
of the Weiss tower. In this section we  
provide a new proof of the classification of $d$-homogeneous functors as Borel $\Aut(d)$-spectra. 

\begin{lem}
The $\infty$-category $\homog{d}(\vect{\bb{k}}, \s)$ of $d$-homogeneous functors is compactly generated by the functor $\bb{S}^{d(-)}$.
\end{lem}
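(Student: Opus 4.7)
The plan is to imitate the proof of \cref{lem: poly compactly generated} in the homogeneous setting, using the two key facts that $\bb{S}^{d(-)}$ is itself $d$-homogeneous and that $\Hom$ out of $\bb{S}^{d(-)}$ computes the $d$-th Weiss cross-effect at zero.

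First I would verify that $\homog{d}(\vect{\bb{k}}, \s)$ is a presentable stable $\infty$-category in which filtered colimits are computed as in $\poly{d}(\vect{\bb{k}}, \s)$. By \cref{lem: poly d smashing}, $P_{d-1} \colon \poly{d}(\vect{\bb{k}}, \s) \to \poly{d-1}(\vect{\bb{k}}, \s)$ is a smashing localization, and $\homog{d}(\vect{\bb{k}}, \s)$ is the fibre of this map in presentable stable $\infty$-categories; consequently the inclusion $\homog{d}(\vect{\bb{k}}, \s) \hookrightarrow \poly{d}(\vect{\bb{k}}, \s)$ preserves all colimits (in particular filtered ones). Since $\bb{S}^{d(-)}$ is $d$-homogeneous by \cite[Example 5.7]{Weiss}, it lies in $\homog{d}(\vect{\bb{k}}, \s)$, and since it is a compact object of $\poly{d}(\vect{\bb{k}}, \s)$ by \cref{lem: poly compactly generated}, the compatibility of filtered colimits upgrades this to compactness in $\homog{d}(\vect{\bb{k}}, \s)$.

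It remains, by the criterion of \cite[Lemma 2.2.1]{SchwedeShipley}, to show that $\bb{S}^{d(-)}$ detects trivial objects in $\homog{d}(\vect{\bb{k}}, \s)$. Suppose $F \in \homog{d}(\vect{\bb{k}}, \s)$ satisfies
\[
\Hom_{\homog{d}(\vect{\bb{k}}, \s)}(\bb{S}^{d(-)}, F) \simeq 0.
\]
As in the proof of \cref{lem: poly compactly generated}, this mapping spectrum agrees with $\Hom_{\Fun(\vect{\bb{k}}, \s)}(\bb{S}^{d(-)}, F) \simeq F^{(d)}(\bb{k}^0)$ via \cite[\S2]{Weiss}. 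Thus $F$ is a $d$-polynomial functor with vanishing $d$-th Weiss cross-effect at the origin, so \cref{lem: d poly with cross effect zero} gives that $F$ is $(d-1)$-polynomial, i.e.\ $F \simeq P_{d-1}F$. But $F$ is $d$-homogeneous, so $P_{d-1}F \simeq 0$, whence $F \simeq 0$.

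The only real subtlety in this argument is the justification that compactness transfers from $\poly{d}(\vect{\bb{k}}, \s)$ to $\homog{d}(\vect{\bb{k}}, \s)$, which hinges on the smashing property from \cref{lem: poly d smashing} ensuring filtered colimits are created by the inclusion; everything else is a direct application of the cross-effect lemmas established earlier in the section.
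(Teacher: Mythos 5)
Your proof is correct and follows the same overall two-step strategy as the paper: first transport compactness of $\bb{S}^{d(-)}$ from $\poly{d}(\vect{\bb{k}}, \s)$, then prove generation by identifying $\Hom$ out of $\bb{S}^{d(-)}$ with the $d$-th Weiss cross-effect at $\bb{k}^0$ and invoking \cref{lem: d poly with cross effect zero}. The generation step matches the paper word for word. Where you diverge is the compactness step: the paper runs the argument through the coreflection $D_d$ right adjoint to the inclusion $\homog{d}(\vect{\bb{k}}, \s) \hookrightarrow \poly{d}(\vect{\bb{k}}, \s)$, using the adjunction equivalence $\Hom_{\homog{d}(\vect{\bb{k}}, \s)}(\bb{S}^{d(-)}, D_d\coprod_i F_i) \simeq \Hom_{\poly{d}(\vect{\bb{k}}, \s)}(\bb{S}^{d(-)}, \coprod_i F_i)$ together with compactness in $\poly{d}(\vect{\bb{k}}, \s)$ and fullness of the inclusion; you instead observe that $\homog{d}(\vect{\bb{k}}, \s)$ is the kernel of the colimit-preserving localization $P_{d-1}$ and is therefore closed under colimits formed in $\poly{d}(\vect{\bb{k}}, \s)$, so filtered colimits are created by the inclusion and compactness is inherited directly. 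These two arguments are nearly interchangeable --- the closure of $\homog{d}(\vect{\bb{k}}, \s)$ under colimits is precisely what makes the paper's $D_d\coprod_i F_i$ coincide with $\coprod_i F_i$ --- but yours is arguably the cleaner way to phrase it, as it makes explicit that the coreflector is not actually needed to form colimits here. One small caveat: the phrase ``fibre of $P_{d-1}$ in presentable stable $\infty$-categories'' is heavier machinery than you need; what does the work is simply that $P_{d-1}$ preserves colimits (being a left adjoint, a fortiori smashing), whence its kernel is closed under colimits.
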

\begin{proof}
We begin by showing that the functor $\bb{S}^{d(-)}$ is compact in $\homog{d}(\vect{\bb{k}}, \s)$. For this, observe that colimits (and in particular, coproducts) in the $\infty$-category of $d$-homogeneous functors are computed by applying the coreflection functor $D_d$ to the colimit in the $\infty$-category of $d$-polynomial functors. Since the coreflection functor $D_d$ is right adjoint to the inclusion, there is an equivalence 
\[
\Hom_{\homog{d}(\vect{\bb{k}}, \s)}\left(\bb{S}^{d(-)}, D_d\coprod_{i \in I} F_i\right) \simeq \Hom_{\poly{d}(\vect{\bb{k}}, \s)}\left(\bb{S}^{d(-)}, \coprod_{i \in I} F_i\right).
\]
By \cref{lem: poly compactly generated} the functor $\bb{S}^{d(-)}$ is compact in the $\infty$-category of $d$-polynomial functors and so there is a further equivalence, 
\[
\Hom_{\poly{d}(\vect{\bb{k}}, \s)}\left(\bb{S}^{d(-)}, \coprod_{i \in I} F_i\right) \simeq \prod_{i \in I}\Hom_{\poly{d}(\vect{\bb{k}}, \s)}(\bb{S}^{d(-)}, F_i),
\]
which completes the proof that $\bb{S}^{d(-)}$ is compact in $\homog{d}(\vect{\bb{k}}, \s)$ upon the observation that for $d$-homogeneous functors $F$ and $G$, 
\[
\Hom_{\homog{d}(\vect{\bb{k}}, \s)}(F, G) \simeq \Hom_{\poly{d}(\vect{\bb{k}}, \s)}(F, G)
\]
by the fact that the $\infty$-category of $d$-homogeneous functors is a full subcategory of the $\infty$-category of $d$-polynomial functors. 

To see that $\bb{S}^{d(-)}$ generates the $\infty$-category of $d$-homogeneous functors it suffices (see the proof of~\cref{lem: poly compactly generated}) to show that if 
\[
\Hom_{\homog{d}(\vect{\bb{k}}, \s)}(\bb{S}^{d(-)}, F) \simeq 0,
\]
then $F \simeq 0$. Since $\homog{d}(\vect{\bb{k}}, \s)$ is a full subcategory of $\Fun(\vect{\bb{k}}, \s)$, this is equivalent to the statement that if $F^{(d)}(\bb{k}^0) \simeq 0$, then $F \simeq 0$ for $F$ a $d$-homogeneous functor. By~\cref{lem: d poly with cross effect zero}, if $F^{(d)}(\bb{k}^0) \simeq 0$, then $F$ is $(d-1)$-polynomial. However, $(d-1)$-polynomial functors are trivial in $\homog{d}(\vect{\bb{k}}, \s)$, hence $F$ is trivial.
\end{proof}

Mirroring the proof of~\cref{main thm} provides a new proof of the classification of homogeneous functors. This proof is much more straightforward than one obtained by transporting the proof of~\cite[Theorem 7.3]{Weiss} into the stable setting, see e.g.,~\cite[Theorem 11.3]{BarnesOman}.

\begin{thm}[\cref{homogeneous classification}]
There is an equivalence 
\[
\homog{d}(\vect{\bb{k}}, \s) \simeq \s^{B\Aut(d)}
\]
realised by sending a Borel $\Aut(d)$-spectrum $\Theta$ to the functor $\bb{k}^n \mapsto (S^{\bb{k}^d \otimes \bb{k}^n} \wedge \Theta)_{h\Aut(d)}$.
\end{thm}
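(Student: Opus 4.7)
The plan is to mirror the proof of \cref{main thm} essentially verbatim, but starting from the single-object generator $\bb{S}^{d(-)}$ instead of the full generating set $G_d$. By the preceding lemma, $\homog{d}(\vect{\bb{k}}, \s)$ is compactly generated by $\bb{S}^{d(-)}$, so the $\infty$-categorical Schwede--Shipley theorem of \cite{AroneBarneaSchlank} gives an equivalence
\[
\homog{d}(\vect{\bb{k}}, \s) \simeq \sf{PSh}_\s(\es{E}_d),
\]
where $\es{E}_d$ is the full spectrally enriched sub-$\infty$-category of $\homog{d}(\vect{\bb{k}}, \s)$ on the object $\bb{S}^{d(-)}$. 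Since $\homog{d}(\vect{\bb{k}}, \s)$ is a full subcategory of $\poly{d}(\vect{\bb{k}}, \s)$, the mapping spectrum is computed as in the proof of \cref{lem: equiv of sp categories}.

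Next I would compute the endomorphism ring spectrum. The identification $\Hom(\bb{S}^{d(-)}, \bb{S}^{d(-)}) \simeq (\bb{S}^{d(-)})^{(d)}(\bb{k}^0)$ together with \cite[Example 5.7]{Weiss} gives
\[
\End_{\es{E}_d}(\bb{S}^{d(-)}) \simeq \Sigma^\infty_+ \Aut(d)
\]
as $\mbb{E}_1$-algebras in spectra (the algebra structure reflecting composition of endomorphisms, which corresponds to group multiplication in $\Aut(d)$ under the homeomorphism $\vect{\bb{k}}(\bb{k}^d,\bb{k}^d) \cong \Aut(d)$). Consequently, $\sf{PSh}_\s(\es{E}_d)$ is equivalent to the $\infty$-category of right $\Sigma^\infty_+ \Aut(d)$-modules in spectra, which in turn is equivalent to the $\infty$-category $\s^{B\Aut(d)}$ of Borel $\Aut(d)$-spectra.

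To identify the equivalence explicitly with the assignment $\Theta \mapsto (S^{\bb{k}^d \otimes \bb{k}^n} \wedge \Theta)_{h\Aut(d)}$, I would trace the inverse equivalence through a tensor--hom argument: under Schwede--Shipley, a $\Sigma^\infty_+ \Aut(d)$-module $\Theta$ is sent to the functor
\[
\bb{k}^n \longmapsto \bb{S}^{d(\bb{k}^n)} \otimes_{\Sigma^\infty_+ \Aut(d)} \Theta \;\simeq\; (S^{\bb{k}^d \otimes \bb{k}^n} \wedge \Theta)_{h\Aut(d)},
\]
where the relevant right $\Aut(d)$-action on $\bb{S}^{d(\bb{k}^n)} = \Sigma^\infty S^{\bb{k}^d \otimes \bb{k}^n}$ is induced by the action of $\Aut(d) = \vect{\bb{k}}(\bb{k}^d,\bb{k}^d)$ on the $\bb{k}^d$-factor, matching the composition structure used to compute $\End(\bb{S}^{d(-)})$.

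The main obstacle I anticipate is the final identification step: one must verify that the right $\Aut(d)$-action arising from the enriched Yoneda/module structure agrees with the geometric action on $S^{\bb{k}^d \otimes \bb{k}^n}$ (through the first tensor factor), and that the resulting tensor product over $\Sigma^\infty_+ \Aut(d)$ truly computes the homotopy orbits $(-)_{h\Aut(d)}$. Steps 1--3 are largely formal consequences of the generator computation and the general Schwede--Shipley/Morita machinery, so the geometric interpretation of the action is where care is needed.
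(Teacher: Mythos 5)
Your proposal is correct and takes essentially the same route as the paper: compact generation by $\bb{S}^{d(-)}$, Schwede--Shipley, the cross-effect computation of $\Hom(\bb{S}^{d(-)}, \bb{S}^{d(-)}) \simeq \Sigma^\infty_+\Aut(d)$ via \cite[Example 5.7]{Weiss}, and an identification of the explicit functor by a (co)tensoring argument. The only cosmetic difference is that you phrase the middle step in module-theoretic language (right modules over the $\mathbb{E}_1$-ring $\Sigma^\infty_+\Aut(d)$), whereas the paper packages the same content through its \cref{lem: equiv of sp categories} (applied with $m=n=d$, giving an equivalence of the one-object spectral category $\langle\bb{S}^{d(-)}\rangle$ with $\Sigma^\infty_+ B\Aut(d)$) and \cref{lem: top vs. sp enriched}; these two formulations coincide because a one-object spectrally enriched $\infty$-category is the same data as an $\mathbb{E}_1$-ring spectrum, and enriched presheaves on it are its right modules. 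Your closing remark correctly flags the only point requiring care --- matching the module action with the geometric $\Aut(d)$-action on $S^{\bb{k}^d\otimes\bb{k}^n}$ --- and the paper handles this the same way you propose, by tracing through the enriched coend formula, noting that a presheaf supported on a single object computes homotopy orbits over $B\Aut(d)$.
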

\begin{proof}
Let $\langle \bb{S}^{d(-)}\rangle$ be the spectrally enriched sub-$\infty$-category of $\homog{d}(\vect{\bb{k}}, \s)$ on the functor $\bb{S}^{d(-)}$. There is a sequence of equivalences
\[
\homog{d}(\vect{\bb{k}}, \s) \simeq \sf{PSh}_\s(\langle \bb{S}^{d(-)}\rangle) \simeq \sf{PSh}_\s(\Sigma^\infty_+ B\Aut(d)) \simeq \s^{B\Aut(d)}
\]
by first applying the Schwede-Shipley equivalence, then the equivalence of~\cref{lem: equiv of sp categories}, and finally~\cref{lem: top vs. sp enriched}.

It is left to identify the functor realising the equivalence. Through this equivalence, a spectrum $\Theta$ with an action of $\Aut(d)$ is equivalent to a presheaf $M_\Theta$ on $\langle \bb{S}^{d(-)}\rangle$, i.e., $M_\Theta(\bb{S}^{d(-)}) = \Theta$. The left adjoint to Schwede-Shipley equivalence is given by sending a presheaf $M$ on $\langle \bb{S}^{d(-)}\rangle$ to the enriched coend of the bifunctor
\[
\langle \bb{S}^{d(-)}\rangle\times \langle \bb{S}^{d(-)}\rangle^\op \longrightarrow \s, \quad (\bb{S}^{d(-)}, \bb{S}^{d(-)}) \longmapsto  \bb{S}^{d(-)} \wedge M(\bb{S}^{d(-)}).
\]
Since $M_\Theta$ is supported only in dimension $d$, the image of $M_\Theta$ under this enriched coend is given by
\[
\underset{{B\Aut(d)}}{\colim}~(\bb{S}^{d(-)} \wedge \Theta).
\]
As such, the image of $\Theta$ under the sequence of equivalences is the functor $(\bb{S}^{d(-)} \wedge \Theta)_{h\Aut(d)}$. 
\end{proof}

\begin{rem}
It is not clear how to lift~\cref{homogeneous classification} to a statement about space-valued homogeneous functors. The ideal method (mirroring Goodwillie calculus) would be to show that the functor 
\[
\Omega^\infty: \homog{d}(\vect{\bb{k}}, \s) \longrightarrow \homog{d}(\vect{\bb{k}}, \T),
\]
induced by postcomposition with $\Omega^\infty: \s \to \T$ is an equivalence. This would follow from the existence of a delooping functor $B: \homog{d}(\vect{\bb{k}}, \T) \to \homog{d}(\vect{\bb{k}}, \T)$. It is straightforward to prove that $B = P_d(\Sigma(-))$ provides a left adjoint to the loops functor $\Omega$ on $\homog{d}(\vect{\bb{k}}, \T)$, but the proof~\cite[Lemma 7.4]{Weiss} that this adjunction is an equivalence crucially uses the classification of space-valued homogeneous functors~\cite[Theorem 7.3]{Weiss}. 
\end{rem}

\bibliography{references}
\bibliographystyle{alpha}
\end{document}